\newtheorem{theorem}{Theorem}
\DeclareMathOperator*{\argmin}{arg\,min}
\newsavebox\myboxA
\newsavebox\myboxB
\newlength\mylenA
\newcommand*\xoverline[2][0.75]{%
    \sbox{\myboxA}{$\m@th#2$}%
    \setbox\myboxB\null
    \ht\myboxB=\ht\myboxA%
    \dp\myboxB=\dp\myboxA%
    \wd\myboxB=#1\wd\myboxA
    \sbox\myboxB{$\m@th\overline{\copy\myboxB}$}
    \setlength\mylenA{\the\wd\myboxA}
    \addtolength\mylenA{-\the\wd\myboxB}%
    \ifdim\wd\myboxB<\wd\myboxA%
       \rlap{\hskip 0.5\mylenA\usebox\myboxB}{\usebox\myboxA}%
    \else
        \hskip -0.5\mylenA\rlap{\usebox\myboxA}{\hskip 0.5\mylenA\usebox\myboxB}%
    \fi}
\journal{arXiv.org}
\newcommand{\TheTitle}{Filtered Stochastic Galerkin Methods For Hyperbolic Equations} 
\date{\today}
\begin{document}

\begin{frontmatter}

\title{\TheTitle}

\author[adressJonas]{Jonas Kusch}
\author[adressRyan]{Ryan G. McClarren}
\author[adressMartin]{Martin Frank}

\address[adressJonas]{Karlsruhe Institute of Technology, Karlsruhe,
    jonas.kusch@kit.edu}
\address[adressRyan]{University of Notre Dame, Notre Dame,  rmcclarr@nd.edu}
\address[adressMartin]{Karlsruhe Institute of Technology, Karlsruhe,
    martin.frank@kit.edu}

\begin{abstract}
Uncertainty Quantification for nonlinear hyperbolic problems becomes a challenging task in the vicinity of shocks. Standard intrusive methods lead to oscillatory solutions and can result in non-hyperbolic moment systems. The intrusive polynomial moment (IPM) method guarantees hyperbolicity but comes at higher numerical costs. In this paper, we filter the gPC coefficients of the Stochastic Galerkin (SG) approximation, which allows a numerically cheap reduction of oscillations. The derived filter is based on Lasso regression which sets small gPC coefficients of high order to zero. We adaptively choose the filter strength to obtain a zero-valued highest order moment, which allows optimality of the corresponding optimization problem. The filtered SG method is tested for Burgers' and the Euler equations. Results show a reduction of oscillations at shocks, which leads to an improved approximation of expectation values and the variance compared to SG and IPM.
\end{abstract}

\begin{keyword}
conservation laws, hyperbolic, intrusive, oscillations, filter, Lasso regression
\end{keyword}

\end{frontmatter}


\section{Introduction}
Systems of hyperbolic equations play a key role in various research areas, including Euler equations in fluid dynamics \cite{toro2013riemann}, magnetohydrodynamics (MHD) equations in plasma physics \cite{sutton2006engineering}, and radiation-hydrodynamics in astrophysics \cite{lowrie1999coupling,mcclarren2008manufactured}.
Though numerical methods for these problems are a continuing area for research, efficient and accurate methods exist for many problems.

Given that accurate numerical solutions exist, computational scientists are increasingly concerned with how uncertainties in the ``input'' data, such as initial/boundary conditions, constitutive relations, and other parameters affect the conclusions drawn from computer simulations. Answering such questions are the purview of the field of uncertainty quantification (UQ) \cite{mcclarrenUQ}.


A general stochastic hyperbolic equation takes the form
\begin{subequations}\label{eq:hyperbolicProblem}
\begin{align}
\partial_t \bm{u}(t,\bm{x},\xi) + \nabla&\cdot\bm{F}(\bm{u}(t,\bm{x},\xi)) = \bm{0}, \\ \label{eq:ic}
\bm{u}(t=0,\bm{x},&\xi) = \bm{u}_{\text{IC}}(\bm{x},\xi).
\end{align}
\end{subequations}
The solution $\bm{u}\in\mathbb{R}^p$ depends on time $t\in\mathbb{R}_+$, physical space $\bm{x}\in\mathcal{D}\subset \mathbb{R}^d$ and the random variable $\xi\in\Theta$, where $\xi$ is distributed according to the probability density function $f_{\Xi}:\Theta\to\mathbb{R}$. In this work, we assume a one-dimensional random variable, however an extension to higher order is possible. The physical flux is $\bm{F} = (\bm{f}_1,\cdots,\bm{f}_d)$ with $\bm{f}_i\in\mathbb{R}^{p}$. Furthermore, initial- and boundary conditions are needed: the initial conditions are specified in \eqref{eq:ic}, and we will specify boundary conditions as needed for particular systems. 

Methods to quantify the uncertainty in the solution due to the random variable $\xi$  can be divided into intrusive and non-intrusive methods. Non-intrusive methods involve using an existing solution technique to compute solutions that are then used in a post-processing fashion to estimate uncertainties.

In this paper, we focus on intrusive methods where the random variables are included in the solution of the model equations. The most commonly used Stochastic Galerkin (SG) \cite{ghanem2003stochastic} method uses the general polynomial chaos (gPC) expansion \cite{wiener1938homogeneous} to represent the solution: Using polynomials $\bm{\varphi} = (\varphi_0,\cdots,\varphi_N)^T$ which are orthonormal w.r.t.\ the probability density function $f_{\Xi}$, the solution can be approximated by
\begin{align}\label{eq:gPC}
\bm{u} \approx \bm{u}_{N} = \sum_{i=0}^N \bm{\hat{u}}_i(t,\bm{x})\varphi_i(\xi).
\end{align}
The gPC expansion coefficients $\bm{\hat{u}}_i = (\hat{u}_{si})_{s = 1,\cdots,p}$ are often called moments and can be used to compute the expected value as well as the variance of the solution by
\begin{align*}
\text{E}[\bm{u}_{N}] &= (\hat{u}_{s0})_{s = 1,\cdots,p}, \\ \text{Var}[\bm{u}_{N}] = \text{E}[\bm{u}_{N}^2] &- \text{E}[\bm{u}_{N}]^2 = \left(\sum_{i = 1}^N \hat{u}_{si}^2\right)_{s = 1,\cdots,p}.
\end{align*}
To derive a time evolution equation for the moments, the SG method plugs \eqref{eq:gPC} into \eqref{eq:hyperbolicProblem} and projects the resulting residual to zero. The SG moment system then reads
\begin{subequations}\label{eq:SGMomentSystem}
\begin{align}
&\partial_t \bm{\hat{u}} + \int_{\Theta} \nabla\cdot\bm{F}(\bm{u}_N)\bm{\varphi}f_{\Xi}\,d\xi = \bm{0}, \\
&\bm{\hat{u}}(t=0,\bm{x}) = \int_{\Theta}\bm{u}_{\text{IC}}(\bm{x},\xi)f_{\Xi}\,d\xi.
\end{align}
\end{subequations}
SG promises pseudo-spectral convergence for smooth data \cite{canuto1982approximation}, which can be seen in simple applications such as \cite{xiu2002modeling,gottlieb2008galerkin}. Spectral convergence has also been proven in \cite{jin2017uniform} for kinetic equations and in \cite{despres2013robust} for Burgers' equation assuming sufficiently smooth solutions.
However, solutions to hyperbolic equations usually exhibit discontinuities, leading to slow convergence rates as well as oscillations from Gibbs phenomenon \cite{poette2009uncertainty}. Moreover, the expectation value obtained with SG shows an incorrect discontinuous profile. Adding numerical diffusion yields a continuous expectation value, but results in a poorly resolved numerical solution \cite{pettersson2009numerical}. Especially regions with deterministic shocks are poorly approximated if too much diffusion is added by the numerical discretization. Furthermore, the SG moment system may not be hyperbolic, which makes the use of standard methods impossible \cite{despres2013robust}.
To ensure hyperbolicity of the moment system, the intrusive polynomial moment (IPM) method has been introduced in \cite{poette2009uncertainty}. Applications of the IPM method can be found in \cite{poette2009uncertainty,poette2011treatment,despres2013robust,kusch2017maximum,schlachter2017hyperbolicity,kusch2018intrusive}. The core idea of IPM is to not expand the conserved variables $\bm{u}$, but the entropy variables. Defining the entropy variables $\bm{v} = U'(\bm{u})$, where $U$ is a strictly convex entropy of \eqref{eq:hyperbolicProblem} lets us write the conserved variables $\bm{u}$ in terms of entropy variables, i.e. $\bm{u} = \bm{u}(\bm{v})$. Rewriting \eqref{eq:hyperbolicProblem} in terms of entropy variables yields
\begin{align*}
\partial_t \bm{u}(\bm{v}) + \nabla\cdot\bm{F}(\bm{u}(\bm{v})) = \bm{0}.
\end{align*}
Expanding $\bm{v}$ with gPC polynomials, i.e.
\begin{align*}
\bm{v} \approx \bm{v}_{N} = \sum_{i=0}^N \bm{\hat{v}}_i(t,\bm{x})\varphi_i(\xi)
\end{align*}
and again projecting the residual to zero gives the IPM moment system
\begin{align*}
\partial_t \bm{\hat{u}}(\bm{\hat{v}}) + \int_{\Theta} \nabla\cdot\bm{F}(\bm{u}(\bm{v}_N))\bm{\varphi}f_{\Xi}\,d\xi = \bm{0}, \text{ with }\enskip\bm{\hat{u}}(\bm{\hat{v}}(t=0,\bm{x})) = \int_{\Theta}\bm{u}_{\text{IC}}(\bm{x},\xi)f_{\Xi}\,d\xi.
\end{align*}
To obtain the gPC coefficients of the entropy variables from the moments, one needs to solve the dual problem
\begin{align}\label{eq:dualProblem}
 \bm{\hat v}(\bm{\hat u}) := \argmin_{\bm{\lambda} \in \mathbb{R}^{p\times N+1}}
  \langle U_*(\bm{\lambda} \bm{\varphi})\rangle - \sum_{s,i}\lambda_{si} \hat u_{si},
\end{align}
where $U_*$ is the Legendre transformation of the entropy $U$. 
The IPM system is hyperbolic \cite{poette2009uncertainty} and guarantees a maximum principle for scalar problems, which prohibits oscillatory over- and undershoots at the maximal and minimal solution values \cite{kusch2017maximum}. However, since the dual problem needs to be solved repeatedly, the IPM is numerically costly compared to the SG method. Furthermore, in the system case, the IPM solution still suffers from oscillations as we will show in Section \ref{sec:results}.

In this paper, we propose to filter the coefficients of the gPC expansion \eqref{eq:gPC} in order to dampen oscillations, similar to \cite{kusch2018intrusive}. Filters are a common strategy to reduce oscillations in spectral methods, see for example \cite{boyd2001chebyshev,hesthaven2007spectral}. Applications of filters in the context of kinetic theory can be found in \cite{mcclarren2010robust}, where a filter is constructed by adding a penalizing term to the L$^2$ error of the solution. 
More choices of filter functions can be found in \cite{frank2016convergence,laboure2016implicit,radice2013new}, however the task of constructing an adequate filter strength for a chosen discretization remains a cumbersome task that is typically user-determined and problem dependent. 

In this work, we apply filters to SG in order to mitigate oscillations in regions with uncertainty, while maintaining high order accuracy in deterministic regions. Additionally, we construct a new filter, which is based on Lasso regression \cite{tibshirani1996regression}. The resulting filter depends on the gPC coefficients and sets small and high order coefficients to zero, yielding sparsity in the filtered coefficients. We use this property to adaptively pick the filter strength such that the moment with highest order is set to zero. This automated choice of the filter strength avoids the tedious task of picking a suitable filter parameter and at the same time promises optimality of the optimization problem, i.e. we obtain a minimal value of the penalized L$^2$ error. We demonstrate the effectiveness of our method by investigating Burgers' and the Euler equations and comparing the results to SG and IPM. One observes that the filtered method outperforms Stochastic Galerkin in the Burgers' case. Due to its reduced runtime, the filtered SG is able to compete with IPM. When taking a look at the Euler equations, the filtered SG yields an improved approximation of shocks.

The paper is structured as follows: In Section \ref{sec:Filters}, we review the concept of filtering and present the new Lasso filter. Section \ref{sec:numerics} discusses the numerical implementation of the filter. An automated way to adaptively pick the filter strength is derived in Section \ref{sec:filterStrength}. The filtered solution is compared to common SG and IPM by investigating Burgers' as well as the Euler equations in Section \ref{sec:results}. In Section \ref{sec:ConclusionOutlook}, we sum up our findings and give an outlook on future work.

\section{Filters for Uncertainty Quantification}
\label{sec:Filters}
In the following, we apply the concept of filtering to UQ and derive the standard $L^2$ filter. The main idea of filters is to dampen high order expansion coefficients in the gPC expansion \eqref{eq:gPC}. Optimality with respect to the L$^2$ norm when approximating a function $\bm{u}$ is achieved if the expansion coefficients are chosen to minimize the cost function 
\begin{align*}
\mathcal{J} := \frac{1}{2}\int_{\Theta} \Vert\bm{u}-\bm{u}_N\Vert_2^2 f_{\Xi} \,d\xi,
\end{align*}
where $\Vert\cdot\Vert_2$ is the Euclidean norm. The minimizer is then given by $\bm{\hat{u}}_i = \int_{\Theta} \bm{u} \varphi_i f_{\Xi} \,d\xi$. This choice suffers from oscillations when the function $\bm{u}$ lacks sufficient regularity. The filtered gPC expansion tackles this problem. It is given by
\begin{align}\label{eq:FilteredGPC}
\bm{u} \approx \bm{u}_{N}^F := \sum_{i=0}^N g(i)\bm{\hat{u}}_i(t,\bm{x})\varphi_i(\xi),
\end{align}
where $g$ is the filter function. Defining the filtered gPC coefficients to be $\hat u_{si}^F := g(i)\hat u_{si}$, a filter function can be constructed by minimizing
\begin{align}\label{eq:costFunction}
\mathcal{J}_{\lambda} := \frac{1}{2}\int_{\Theta} \Vert\bm{u}-\bm{u}_N^F\Vert_2^2 f_{\Xi} \,d\xi + \lambda \int_{\Theta} \left\Vert L\bm{u}_N^F\right\Vert^2_2 f_{\Xi} \,d\xi,  \qquad \lambda \geq 0,
\end{align}
over the filtered coefficients. The operator $L$ is commonly chosen to punish oscillations. A standard choice for uniform distributions is $Lu(\xi) = ((1-\xi^2)u'(\xi))'$, since 
\begin{align*}
L\varphi_i = -i(i+1)\varphi_i,
\end{align*}
i.e.\ the Legendre polynomials\footnote{For arbitrary distributions, the operator should be chosen s.t. the corresponding gPC polynomials are eigenfunctions of $L$.} are eigenfunctions of $L$. For ease of exposition we assume a scalar problem, i.e.\ $p=1$. Extending the results to systems is straight forward. The filtered solution representation is now
\begin{align*}
u \approx u_{N}^F := \sum_{i=0}^N \hat{u}_i^F \varphi_i = \sum_{i=0}^N g(i) \hat{u}_i \varphi_i,
\end{align*}
where the filter function $g(i)$ damps high order expansion coefficients. Differentiation of \eqref{eq:costFunction} w.r.t. $\hat{u}_i^F$ gives
\begin{align*}
\hat{u}_i^F = \frac{1}{1+\lambda i^2(i+1)^2}\hat{u}_i,
\end{align*}
i.e. the filter function is
\begin{align}\label{eq:L2FilterFunction}
g(i) = \frac{1}{1+\lambda i^2(i+1)^2}.
\end{align}
This corresponds to the $L^2$ filter based on splines\footnote{This is not the only filter which damps oscillations. Indeed, several other filters can be used such as Lanczos and ErfcLog\cite{radice2013new}.}. One can see that the filter damps high order coefficients, while leaving the $0_{th}$ order coefficient untouched, meaning that we maintain the conservation property. 

The filter strength $\lambda$ must be chosen such that oscillations are sufficiently dampened while the solution structure is preserved. Finding an adequate filter strength is challenging. In the following, we derive a filter which can be used to choose this filter strength.
\subsection{Construction of the Lasso Filter}
Our task is to find a smooth representation of $\bm{u}$ which promotes sparsity. Combining the ideas of Lasso regression and filtering, we introduce the cost functional
\begin{align}\label{eq:LassoFunctional}
\mathcal{J}_{\lambda}(\bm{\hat{u}}^F) = \frac{1}{2}\int_{\Theta} \left( u-\sum_{i=0}^N \hat{u}_i^F \varphi_i \right)^2 f_{\Xi}\,d\xi + \lambda \int_{\Theta}\sum_{i=0}^N \left\vert L\hat{u}_i^F \varphi_i \right\vert f_{\Xi}\,d\xi.
\end{align}
Compared to L$^2$ filtering, the punishing term has been changed to an L$^1$ term which acts on each expansion term of the solution individually. The corresponding filter takes the following form:
\begin{theorem}
The filter that corresponds to the cost functional \eqref{eq:LassoFunctional} is given by 
\begin{align}\label{eq:LassoFilterFunction}
g(i,\hat u_i) = \left(1 - \frac{\lambda i(i+1)\Vert \varphi_i \Vert_{L^1}}{\vert \hat u_i \vert}\right)_+,
\end{align}
where $(\cdot)_+$ is defined as
\begin{align*}
x_+ = 
\begin{cases}
x & x>0 \\
0 & \mathrm{ otherwise}
\end{cases}.
\end{align*}
\end{theorem}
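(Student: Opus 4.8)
The plan is to minimize the cost functional \eqref{eq:LassoFunctional} directly over the filtered coefficients $\hat u_i^F$, exploiting the orthonormality of the $\varphi_i$ to decouple the problem coordinate by coordinate. First I would expand the quadratic data-fidelity term: since $\langle \varphi_i \varphi_j\rangle = \delta_{ij}$ and $\hat u_i = \langle u\varphi_i\rangle$, we get $\frac12\int_\Theta(u-\sum_i \hat u_i^F\varphi_i)^2 f_\Xi\,d\xi = \frac12\|u\|^2 - \sum_i \hat u_i \hat u_i^F + \frac12\sum_i (\hat u_i^F)^2$, which splits as a sum over $i$ of terms $\tfrac12(\hat u_i^F)^2 - \hat u_i\hat u_i^F$ plus a constant. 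For the penalty term I would use that $L\varphi_i = -i(i+1)\varphi_i$, so $|L\hat u_i^F\varphi_i| = i(i+1)|\hat u_i^F|\,|\varphi_i|$ and hence $\lambda\int_\Theta |L\hat u_i^F\varphi_i| f_\Xi\,d\xi = \lambda i(i+1)|\hat u_i^F|\,\|\varphi_i\|_{L^1}$. Thus $\mathcal J_\lambda$ decouples into a sum of one-dimensional problems, one for each $i$, of the soft-thresholding form $\min_{w\in\mathbb R} \tfrac12 w^2 - \hat u_i w + c_i |w|$ with $c_i := \lambda i(i+1)\|\varphi_i\|_{L^1} \ge 0$.

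Next I would solve this scalar problem. It is strictly convex, so there is a unique minimizer characterized by the subdifferential condition $0 \in w - \hat u_i + c_i\,\partial|w|$. Splitting on the sign of $w$: if $\hat u_i > c_i$ the minimizer is $w = \hat u_i - c_i > 0$; if $\hat u_i < -c_i$ it is $w = \hat u_i + c_i < 0$; and if $|\hat u_i|\le c_i$ then $w=0$ (here the subgradient of $|\cdot|$ at $0$ is the interval $[-1,1]$ and $-\hat u_i \in [-c_i,c_i]$ makes $0$ stationary). These three cases combine compactly into the soft-threshold $w = \operatorname{sign}(\hat u_i)\,(|\hat u_i| - c_i)_+$, which I can rewrite as $w = \hat u_i\bigl(1 - c_i/|\hat u_i|\bigr)_+$ whenever $\hat u_i \ne 0$ (and $w = 0$ when $\hat u_i = 0$, consistently). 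Reading off the filter function $g(i,\hat u_i) = \hat u_i^F / \hat u_i$ then gives exactly \eqref{eq:LassoFilterFunction}.

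For $i=0$ one has $c_0 = 0$, so $g(0,\hat u_0) = 1$ and the zeroth coefficient is untouched, preserving conservation; this is worth a one-line remark. The main obstacle — really the only non-routine point — is handling the non-differentiability of the $L^1$ penalty carefully: one cannot simply "differentiate \eqref{eq:LassoFunctional} and set it to zero" as was done for the $L^2$ filter, but must argue via convexity and the subdifferential (or, equivalently, check the three candidate regimes and verify each is a global minimizer by convexity). The decoupling via orthonormality is the structural observation that makes everything reduce to the classical soft-thresholding lemma, and I would state that reduction explicitly before invoking it.
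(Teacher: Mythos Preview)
Your proposal is correct and follows essentially the same approach as the paper: both use orthonormality of the $\varphi_i$ and the eigenfunction property $L\varphi_i=-i(i+1)\varphi_i$ to decouple the problem, then handle the nondifferentiable $L^1$ penalty via the subdifferential of $|\cdot|$ and a case split on whether $\hat u_j^F$ vanishes. The only difference is presentational---you first reduce explicitly to the scalar soft-thresholding problem and then solve it, whereas the paper computes $\partial_j\mathcal J_\lambda$ of the full functional and simplifies in place---but the underlying argument is the same.
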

\begin{proof}
The proof follows standard ideas from Lasso regression, see \cite{tibshirani1996regression}. To minimize the cost functional \eqref{eq:LassoFunctional}, we compute the subdifferential\footnote{At differentiable points, the subdifferential of $f$ is simply the gradient. At non-differentiable points $x_0$, the subdifferential is the set of slopes belonging to all straight lines that touch $f(x_0)$ and lie below $f$ in the neighborhood of $f(x_0)$.}
\begin{align}\label{eq:LassoFilterFunctionGrad}
\partial_{j} \mathcal{J}_{\lambda}(\bm{\hat{u}}^F) =
\begin{cases}
\left\{ -\int_{\Theta} \left( u-\sum_{i} \hat u_i^F \varphi_i \right) \varphi_j f_{\Xi}\,d\xi + \lambda\eta \int_{\Theta} \left\vert L \varphi_j \right\vert f_{\Xi}\,d\xi : \eta \in [-1,1] \right\} & \enskip \text{ if } \hat u_j^F = 0 \\
-\int_{\Theta} \left( u-\sum_{i} \hat u_i^F \varphi_i \right) \varphi_j f_{\Xi}\,d\xi + \lambda\cdot\text{sign}(u_i^F) \int_{\Theta} \left\vert L \varphi_i \right\vert f_{\Xi}\,d\xi & \enskip \text{ else }
\end{cases}.
\end{align}
We now need to pick $\hat u_j^F$, s.t.\  the cost function \eqref{eq:LassoFunctional} is minimized. Assuming $\hat u_j^F = 0$, this translates into ensuring that the zero slope lies in the set of the first condition of $\partial_{j} \mathcal{J}_{\lambda}$, i.e.
\begin{align*}
0 \in \left\{-\int_{\Theta} \left( u-\sum_{i} \hat u_i^F \varphi_i \right) \varphi_j f_{\Xi}\,d\xi + \lambda\eta \int_{\Theta} \left\vert L \varphi_j \right\vert f_{\Xi}\,d\xi : \eta \in [-1,1] \right\}.
\end{align*}
Using orthonormality and the definition of gPC coefficients gives
\begin{align*}
0 \in \left\{-\hat u_j +\hat u_j^F + \lambda\eta \int_{\Theta} \left\vert L \varphi_j \right\vert f_{\Xi}\,d\xi : \eta \in [-1,1] \right\}.
\end{align*}
Recalling that $\hat u_j^F=0$ and $L\varphi_j = -j(j+1)\varphi_j$ yields
\begin{align*}
0 \in \left\{-\hat u_j + \lambda\eta j(j+1)  \Vert  \varphi_j \Vert_{L^1} : \eta \in [-1,1] \right\}.
\end{align*}
Hence
\begin{align}\label{eq:f0condition}
\hat u_j \in [-\lambda j(j+1)  \Vert  \varphi_j \Vert_{L^1},\lambda j(j+1)  \Vert  \varphi_j \Vert_{L^1}],
\end{align}
which indicates for which values of $\hat u_j$ the filtered coefficient have a value of zero.
For non-zero values of $\hat u_j^F$, i.e. if
\begin{align}\label{eq:LassoCondition2}
\hat u_j \notin [-\lambda j(j+1)  \Vert  \varphi_j \Vert_{L^1},\lambda j(j+1)  \Vert  \varphi_j \Vert_{L^1}],
\end{align}
the gradient becomes the second condition of \eqref{eq:LassoFilterFunctionGrad}, which is
\begin{align}\label{eq:Grad0}
\partial_{j} \mathcal{J}_{\lambda}(\bm{\hat{u}}^F) =& -\int_{\Theta} \left( u-\sum_{i} \hat u_i^F \varphi_i \right)  \varphi_j f_{\Xi}\,d\xi + \lambda\text{sign}(\hat u_j^F) \int_{\Theta} \left\vert L  \varphi_j \right\vert f_{\Xi}\,d\xi \nonumber \\
=& -\int_{\Theta} u  \varphi_j f_{\Xi}\,d\xi -\sum_{i} \hat u_i^F\int_{\Theta}  \varphi_i  \varphi_j f_{\Xi}\,d\xi + \lambda j(j+1) \text{sign}(\hat u_j^F) \int_{\Theta} \left\vert  \varphi_j \right\vert f_{\Xi}\,d\xi \nonumber \\
=& -\hat u_j +\hat u_j^F + \lambda j(j+1) \text{sign}(\hat u_j^F) \Vert  \varphi_j \Vert_{L_1} \stackrel{!}{=}0.
\end{align}
To determine the sign of $\hat u_j^F$, we rearrange
\begin{align*}
\hat u_j =& \hat u_j^F + \lambda j(j+1) \text{sign}(\hat u_j^F) \Vert  \varphi_j \Vert_{L^1} \\
=& \hat u_j^F\underbrace{\left(1+\lambda j(j+1) \frac{1}{\vert \hat u_j^F \vert} \Vert  \varphi_j \Vert_{L^1}\right)}_{>0} \Rightarrow \text{sign}\left(\hat u_j\right) = \text{sign}\left(\hat u_j^F\right).
\end{align*}
Plugging this into \eqref{eq:Grad0} yields
\begin{align*}
\hat u_j^F =& \hat u_j - \lambda j(j+1) \text{sign}(\hat u_j) \Vert  \varphi_j \Vert_{L^1} \\
=&\hat u_j\left( 1 - \lambda j(j+1)\Vert  \varphi_j \Vert_{L^1}\frac{1}{\vert \hat u_j \vert}\right).
\end{align*}
Note that the case $1 - \lambda j(j+1)\Vert  \varphi_j \Vert_{L^1}\frac{1}{\vert \hat u_j \vert}<0$ does not occur, since 
\begin{align*}
1 - \frac{\lambda j(j+1)\Vert  \varphi_j \Vert_{L^1}}{\vert \hat u_j \vert}&<0 \\
\Leftrightarrow\vert \hat u_j \vert - \lambda j(j+1)\Vert  \varphi_j \Vert_{L^1}&<0 \\
\Leftrightarrow\vert \hat u_j \vert &< \lambda j(j+1)\Vert  \varphi_j \Vert_{L^1}
\end{align*}
holds, meaning that 
\begin{align*}
\hat u_j\in [-\lambda j(j+1)\Vert  \varphi_j \Vert_{L^1},\lambda j(j+1)\Vert  \varphi_j \Vert_{L^1}],
\end{align*}
which violates \eqref{eq:LassoCondition2}. Hence, in this case, we need to look at the first condition of the subdifferential, meaning that $\hat u_j^F$ must be set to zero.
Using the notation
\begin{align*}
x_+ = 
\begin{cases}
x \enskip \text{ if } x>0 \\
0 \enskip \text{ else}
\end{cases},
\end{align*}
the filtered coefficient can be written as
\begin{align*}
\hat u_j^F = \hat u_j\left( 1 - \frac{\lambda j(j+1)\Vert  \varphi_j \Vert_{L^1}}{\vert \hat u_j \vert}\right)_+,
\end{align*}
which yields the filter function from the theorem.
\end{proof}

The constructed filter will in the following be called \textit{Lasso filter}.  In contrast to standard filters, the filter function \eqref{eq:LassoFilterFunction} depends on the moments of the solution. The first moment is not modified while higher order moments are dampened. Note that if
\begin{align*}
\frac{\lambda j(j+1)\Vert  \varphi_j \Vert_{L^1}}{\vert \hat u_j \vert} \geq 1,
\end{align*}
i.e. when the order of the moment increases or the absolute value of the moment decreases, the filtered moment will be chosen to be zero.

\begin{figure}[h!]
\centering
\begin{subfigure}[b]{0.8\textwidth}
   \includegraphics[width=1\linewidth]{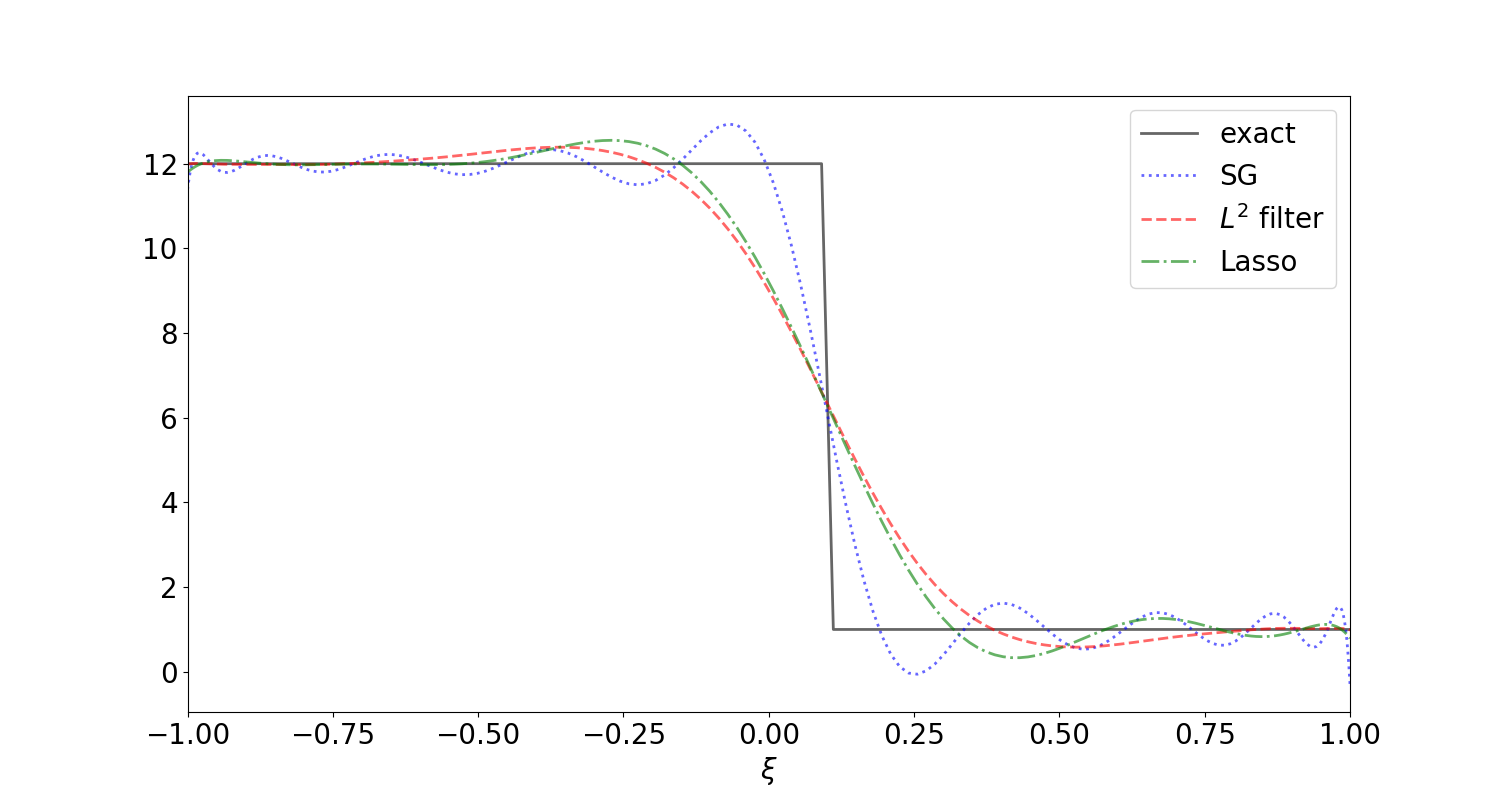}
   \caption{}
   \label{fig:solutionApproximationFilter} 
\end{subfigure}

\begin{subfigure}[b]{0.8\textwidth}
   \includegraphics[width=1\linewidth]{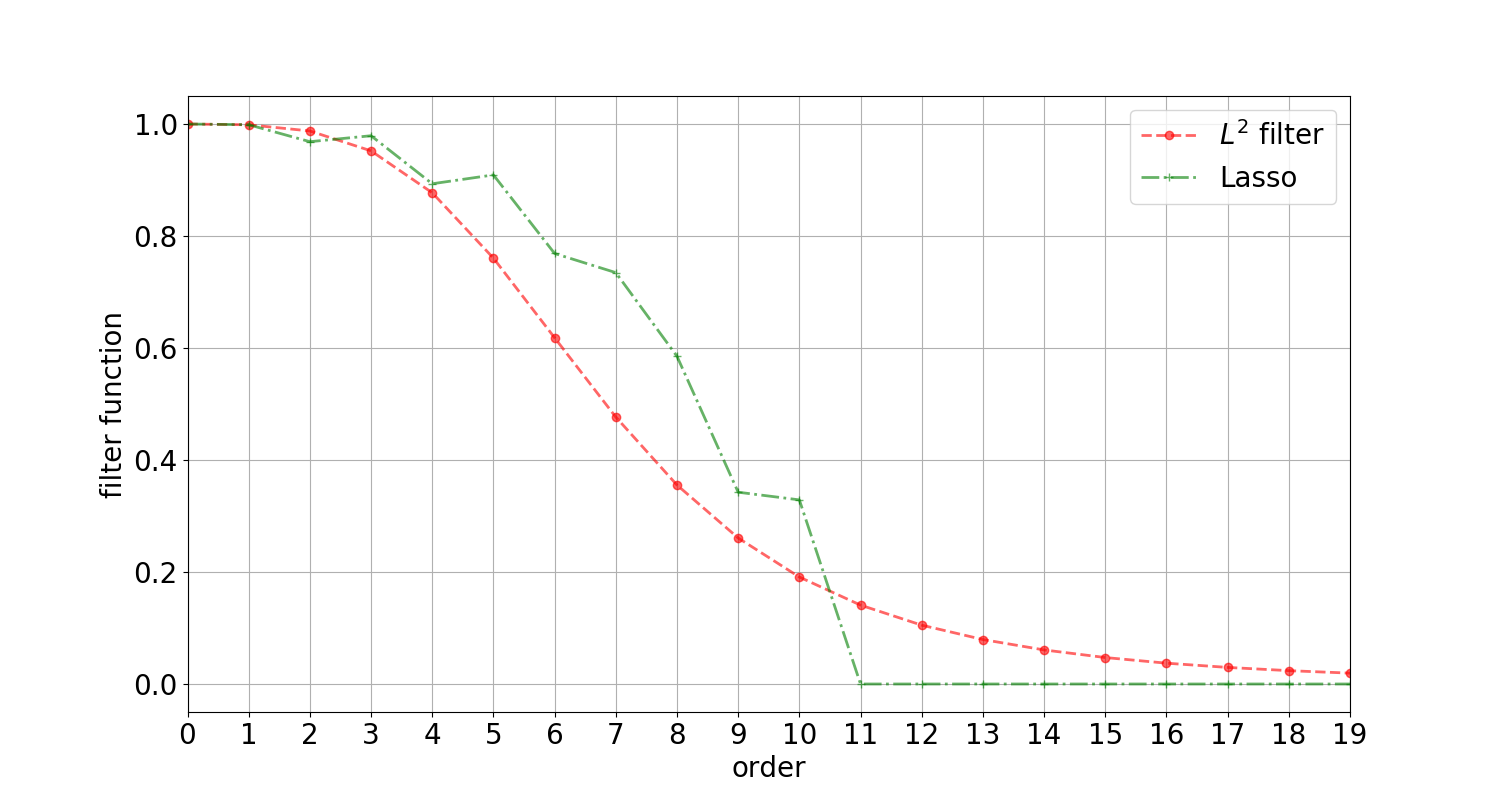}
   \caption{}
   \label{fig:filterFunction}
\end{subfigure}

\caption{(a) Approximation of a shock using SG and filtered SG with $20$ expansion coefficients. (b) Filter functions for Lasso with $\lambda = 0.0035$ and L$^2$ filtering with $\lambda = 0.00035$.}
\end{figure}

To demonstrate the effects of filtering, we consider a shock as depicted in Figure~\ref{fig:solutionApproximationFilter}. Using the standard SG approximation \eqref{eq:gPC} leads to oscillatory solutions. The filtered approximations are computed by \eqref{eq:FilteredGPC} making use of the filtered gPC coefficients \eqref{eq:L2FilterFunction} with the filter functions \eqref{eq:LassoFilterFunction} for the Lasso filter and \eqref{eq:L2FilterFunction} for the L$^2$ filter. Filter parameters for both methods are chosen such that the solution approximations show similar behavior. The corresponding filter functions are shown in Figure~\ref{fig:filterFunction}. In contrast to the L$^2$ filter, the Lasso filter yields a sparse solution representation, due to the fact that all coefficients with degree bigger than 10 are set to zero. The L$^2$ filter keeps coefficients with high order, however their contribution to the solution approximation is negligible. Effects of using filter functions are shown in Figure~\ref{fig:solutionApproximationFilter}, where both, the Lasso and L$^2$ filtered solution mitigate oscillations at the cost of resulting in a smeared out shock approximation. 

When the exact solution does not depend on $\xi$, i.e. only the zeroth order moment is non-zero, the filter does not affect the approximation since $g(0) = 1$. Consequently, the filters allow a sharp approximation of a deterministic shock in the spatial domain.

Note, that we have only discussed scalar random variables. For multi-dimensional problems, i.e. if $\bm{\xi}\in\mathbb{R}^{K}$ with $K>1$, the operator $L$, which punishes oscillations can be applied multiple times: If
\begin{align*}
L_k u(\bm\xi):=\partial_{\xi_k}((1-\xi_k^2)\partial_{\xi_k} u(\bm\xi)),
\end{align*}
one can use the operator
\begin{align*}
L u(\bm\xi) := (L_1 \circ L_2 \circ \cdots \circ L_K)(u(\bm\xi))
\end{align*}
in the cost function of the optimization problem \eqref{eq:costFunction} or for the Lasso filter \eqref{eq:LassoFunctional}.

Before turning to the choice of the filter strength $\lambda$, we need to discuss how the filtering procedure can be integrated into the SG framework. The idea is to replace the standard gPC coefficients in the time update of the numerical discretization by the filtered coefficients. Details on how the filter can be integrated into a given SG solver are discussed in the following.

\section{Numerical implementation}
\label{sec:numerics}

In this section, we present the numerical discretization of the moment system, when using filters. Here, we again assume a non-scalar problem, meaning that $p>1$. To discretize the moment system in time and space, a spatial grid $x_1,\cdots,x_{N_x}$ and a time grid $t_0,\cdots,t_{N_t}$ is used. The discretized solution is then given by 
\begin{align}\label{eq:discreteMoments}
\hat u_{sij}^n \simeq \frac{1}{\Delta x}\int_{x_{j-1/ 2}}^{x_{j+1/ 2}}\hat u_{si}(t_n,x) dx,
\end{align}
i.e. $s$ indicates the state, $i$ is the moment order and $j$ is the spatial cell. In each cell at every time step, the moment vector is collected in $\bm{\hat u}_j^n = (\hat u_{sij}^n)_{s,i}$. If a numerical flux $\bm{f^*}(\bm{u}_{\ell},\bm{u}_r)$ for the initial hyperbolic equation \eqref{eq:hyperbolicProblem} is given, a flux for the moment system can be constructed by
\begin{align}\label{eq:numFlux}
\bm{F}^*(\bm{\bm{\hat u}_{j}^n,\bm{\hat u}_{j+1}^n}) = \int_{\Theta} \bm{f^*}(\bm{\hat u}_{j}^n\bm{\varphi},\bm{\hat u}_{j+1}^n\bm{\varphi}) \bm{\varphi} f_{\Xi} \,d\xi.
\end{align}
Note that in general, this integral can be computed analytically. After having computed the moments of the initial condition via \eqref{eq:discreteMoments}, the moment vector can be time-updated iteratively by
\begin{align}
\bm{\hat u}_j^{n+1} = \bm{\hat u}_j^{n} -\frac{\Delta t}{\Delta x}(\bm{F^*}(\bm{\hat u}_{j}^n,\bm{\hat u}_{j+1}^n)-\bm{F^*}(\bm{\hat u}_{j-1}^n,\bm{\hat u}_{j}^n))
\end{align}
This gives a numerical approximation of the standard SG moment system \eqref{eq:SGMomentSystem}. To dampen oscillations, the filtering step is included in every time step before the moments are used in the numerical fluxes. The filtered SG scheme can be found in algorithm \ref{alg:fSG}.

\begin{algorithm}[H]
\begin{algorithmic}[1]
\STATE$\bm{\hat u}^{0}_j \leftarrow setupInitialConditions$ for all cells $j$
\STATE choose $\lambda$
\FOR{$n=0$ to $N_t$}

\FOR{$s = 1$ to $p$, $i = 0$ to $N$, $j = 1$ to $N_x$}
\STATE $\hat u_{sij}^n \leftarrow g_{\lambda}(i,\hat u_{sij}^n)\hat u_{sij}^n$
\ENDFOR

\FOR{$j=1$ to $N_x$}
\STATE $ \bm{\hat u}_j^{n+1} \leftarrow \bm{\hat u}_j^{n} -\frac{\Delta t}{\Delta x}(\bm{F^*}(\bm{\hat u}_{j}^n,\bm{\hat u}_{j+1}^n)-\bm{F^*}(\bm{\hat u}_{j-1}^n,\bm{\hat u}_{j}^n))$ 
\ENDFOR

\ENDFOR
\end{algorithmic}
\caption{Filtered Stochastic Galerkin Method}
\label{alg:fSG}
\end{algorithm}

\section{Choosing the filter strength}
\label{sec:filterStrength}
A cumbersome task when using filters is to select an adequate filter strength $\lambda$, which sufficiently damps oscillations while preserving general characteristics of the exact solution. Since the optimal filter strength is problem dependent, a parameter study must be conducted for finding an optimal value for $\lambda$. Furthermore, the filter strength does not depend on the solution, i.e. smooth regions are as strongly dampened as discontinuities. In the following, an automated procedure to pick an adequate filter strength is proposed. The resulting filter is different for every spatial cell as well as every time step.\\

We start writing down the Lasso optimization problem \eqref{eq:LassoFunctional} for a given truncation order $M$:
\begin{align*}
\mathcal{J}_{\lambda}(\bm{\hat{u}}^F) = \frac{1}{2}\int_{\Theta} \left( u-\sum_{i=0}^M \hat{u}_i^F \varphi_i \right)^2 f_{\Xi}\,d\xi + \lambda \int_{\Theta}\sum_{i=0}^M \left\vert L\hat{u}_i^F \varphi_i \right\vert f_{\Xi}\,d\xi.
\end{align*}
Without the Lasso regression term, the solution of the optimization problem will yield the exact solution $u$ for $M\to\infty$. When adding the Lasso term with some choice for $\lambda$, we observe that the solution to the optimization problem becomes sparse and for some $\tilde N$, all moments $\hat u_i$ with $i>\tilde N$ are set to zero. Thus solving the SG system with the truncation order $\tilde N$ or with a much higher order $M \gg \tilde N$, where $M$ can even be infinite, will yield the same result.

Keeping this observation in mind, we have two options and are in zugzwang: 1) either make some choice for $\lambda$ which then tells us a suitable truncation order $\tilde N$, or 2) pick a truncation order, which then determines the filtering coefficient $\lambda$. In this work, we choose option 2, i.e.\ we derive a strategy to choose an adequate $\lambda$ for a given truncation order: Denoting this truncation order by $N$, the filter strength of state $s$ in cell $j$ at time step $n$ is given by
\begin{align}
\lambda^* = \frac{\vert \hat u_{N,j}^n\vert}{ N(N+1) \Vert \varphi_N \Vert_{L^1}}.
\end{align}
This choice ensures that the $N_{th}$ filtered coefficient is zero. Since the filter function $g(i,\hat u_i)$ decreases quadratically in $i$, the event that all moments $\hat u_i$ with $i>N$ in the individual cell at the given time are zero is likely. Therefore, with this choice of the filter coefficient, we obtain the same solution as with an order $M\gg N$ moment system. The resulting filtering function is then given by
\begin{align}
\tilde{g}\left(\hat u_{i,j}^n ,\hat u_{N,j}^n\right)=\left(1 - \frac{ i(i+1)}{N(N+1)}\frac{\Vert  \varphi_i \Vert_{L^1}}{ \Vert \varphi_N \Vert_{L^1}}\frac{\vert \hat u_{N,j}^n\vert}{\vert \hat u_{i,j}^n \vert} \right)_+.
\end{align}
In the system case, we have
\begin{align}
\tilde{g}\left(\hat u_{s,i,j}^n ,\hat u_{s,N,j}^n\right)=\left(1 - \frac{ i(i+1)}{N(N+1)}\frac{\Vert  \varphi_i \Vert_{L^1}}{ \Vert \varphi_N \Vert_{L^1}}\frac{\vert \hat u_{s,N,j}^n\vert}{\vert \hat u_{s,i,j}^n \vert} \right)_+.
\end{align}
Substituting this choice of the filter coefficient into algorithm~\ref{alg:fSG} yields the method we use in the following section to obtain non-oscillatory approximations of expected value and variance.

\section{Numerical Results}
\label{sec:results}

In the following, we compare the Lasso filter to standard SG and IPM. The IPM method can yield so-called non-realizable moments that lead to a failure of the optimization problem. To prevent this behavior, we recalculate moments with dual states from the optimization problem as discussed in \cite{kusch2017maximum}.

\subsection{Burgers' equation}
In the following, we study the stochastic Burgers' equation, which reads
\begin{subequations}\label{eq:Burgers}
\begin{align}
\partial_t &u(t,x,\xi)+\partial_x \frac{u(t,x,\xi)^2}{2} = 0,\\
&u(t=0,x,\xi) = u_{\text{IC}}(x,\xi).
\end{align}
\end{subequations}
Following \cite{poette2009uncertainty}, we choose the random initial condition as
\begin{align}\label{eq:IC1}
u_{\text{IC}}(x,\xi) &:= 
\begin{cases} u_L, & \mbox{if } x< x_0+\sigma\xi \\ u_L+\frac{u_R-u_L}{x_0-x_1} (x_0+\sigma \xi-x), & \mbox{if } x\in[x_0+\sigma \xi,x_1+\sigma \xi]\\
u_R, & \text{else }
\end{cases}
\end{align}
which is a forming shock with a linear connection from $x_0+\sigma \xi$ to $x_1+\sigma \xi$. The random variable $\xi$ is uniformly distributed on the interval $\Theta = [-1,1]$. Furthermore, we have Dirichlet boundary conditions $u(t,a,\xi) = u_L$ and $u(t,b,\xi)=u_R$. The numerical flux is chosen according to \eqref{eq:numFlux} where the underlying numerical flux $f^*$ is chosen to be Lax-Friedrichs. Additionally, we use the following parameter values:\\
\begin{center}
    \begin{tabular}{ | l | p{7cm} |}
    \hline
    $[a,b]=[0,3]$ & range of spatial domain \\
    $N_x=2000$ & number of spatial cells \\
    $t_\mathrm{end}=0.11$ & end time \\
    $x_0 = 0.5, x_1=1.5, u_L = 12, u_R = 1, \sigma = 0.2$ & parameters of initial condition \eqref{eq:IC1} \\
    \hline
    \end{tabular}
\end{center}

\begin{figure}[h!]
\centering
\begin{subfigure}{.5\textwidth}
  \centering
  \includegraphics[width=1.0\linewidth]{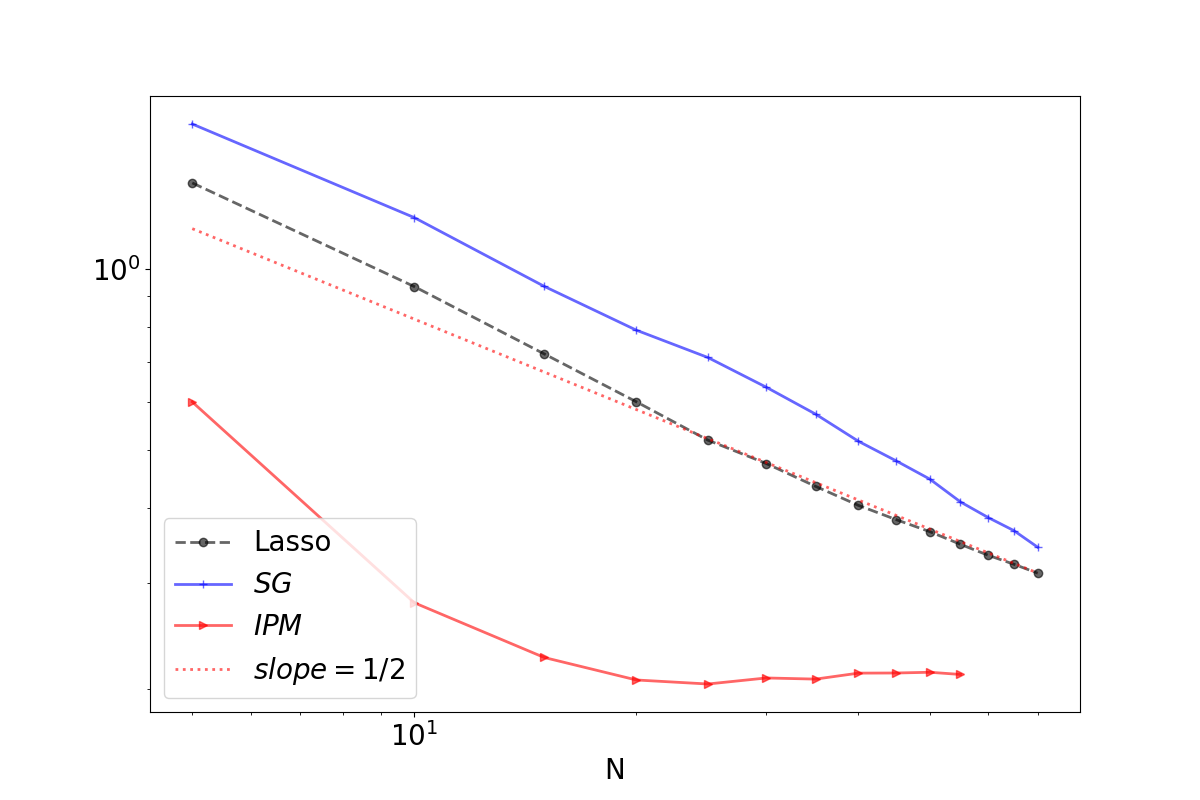}
  \caption{Error of solution.}
  \label{fig:ConvergenceL2Error}
\end{subfigure}%
\begin{subfigure}{.5\textwidth}
  \centering
  \includegraphics[width=1.0\linewidth]{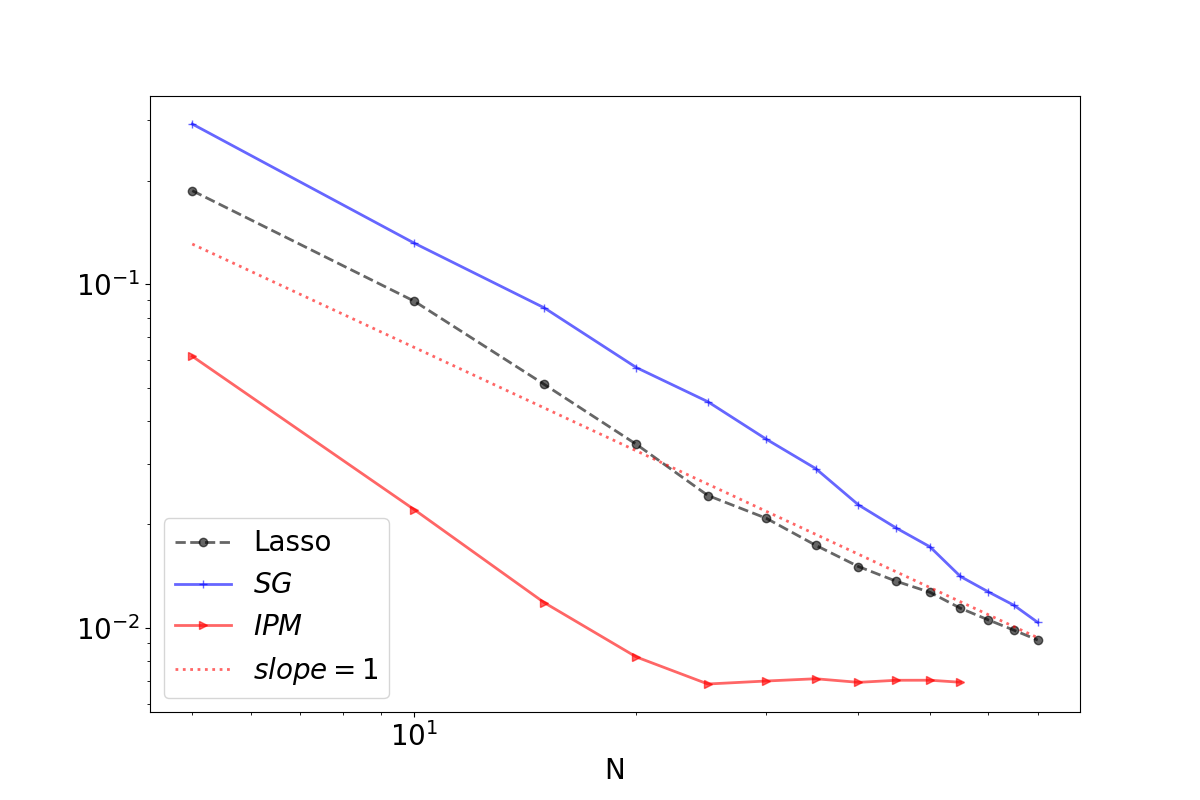}
  \caption{Error of expectation value.}
  \label{fig:ConvergenceExpectedValue}
\end{subfigure}
\caption{Convergence behavior of Burgers' equation for increasing truncation order $N$.}
\label{fig:Convergence}
\end{figure}

\begin{figure}[h!]
\centering
  \includegraphics[width=0.8\linewidth]{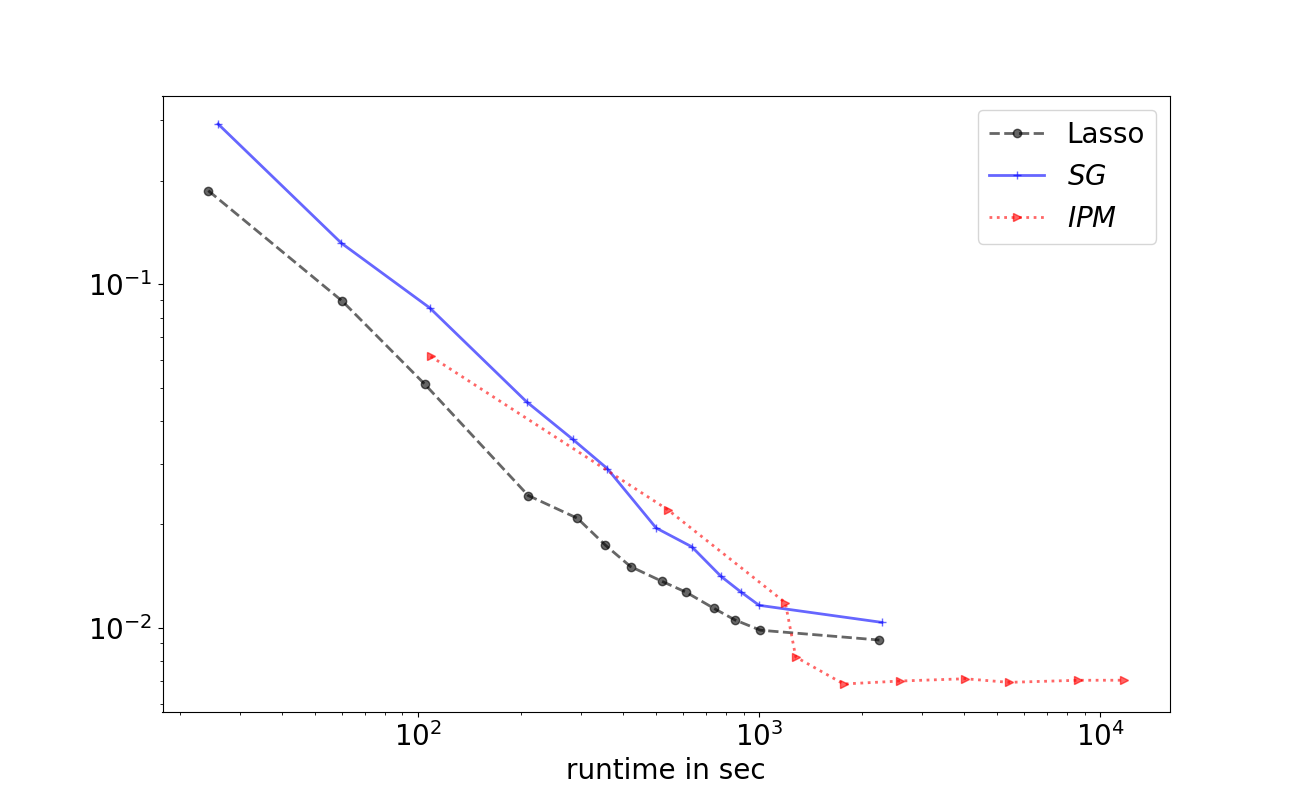}
  \caption{Error of the expectation value plotted over runtime.}
  \label{fig:efficencyPlotBurgers}
\end{figure}

For the IPM solution, we use the bounded--barrier entropy introduced in \cite{kusch2017maximum}, which is
\begin{align}\label{eq:BBEntropy}
s(u) = (u-u_-)\ln(u-u_-)+(u_+-u)\ln(u_+-u),
\end{align}
where $u_- = \min_{x,\xi} u_{\text{IC}}$ and $u_+ = \max_{x,\xi} u_{\text{IC}}$.  Note that when using SG as well as the Lasso filter, all arising integrals can be computed exactly before running the program. In the case of IPM, we use a Gauss-Lobatto quadrature rule with $4N$ quadrature points. With increasing time, the exact solution forms a shock in the random as well as physical space. The effects of the SG and filtered SG solution, when approximating this discontinuity are studied in the following. We increase the number of moments and observe the resulting error of the solution as well as the expectation value. The error of the solution itself, i.e.
\begin{align*}
\Vert u - u_N \Vert_{L^2(\mathcal{D},\Theta)} := \sqrt{\int_{\mathcal{D}} \int_{\Theta} \left(u(t_{\text{end}},x,\xi)-u_N(t_{\text{end}},x,\xi)\right)^2 f_{\Xi}\,d\xi dx}
\end{align*}
is shown in Figure \ref{fig:ConvergenceL2Error}. Note that the convergence does not only depend on the projection error which is $1/2$, but also on the method's closure error (i.e. the error arising from the error in the physical flux due to the approximation). Both methods show an overall convergence speed of $1/2$, however the filtered SG starts at a smaller error value. Consequently, the filtered Solution computed with $20$ moments has a smaller error than the classical SG solution with $30$ moments. When the moment number increases, the SG result approaches the filtered SG solution. This is due to the fact that the last moments is getting close to zero, i.e. the filter is turned off. The IPM method gives a good approximation, already for a small moment order. After a truncation order of $15$, the error is not further decreased, which is most likely caused by a too dominant error of the spatial and time discretization. A similar behavior can be found in Figure \ref{fig:ConvergenceExpectedValue}. Here, the errors of the expectation value are plotted for different numbers of moments. Since the expectation value is smoother than the solution, we expect a faster convergence to the exact solution. The order of convergence appears to be in the order of one. Again, the filtered SG yields a smaller error and is approached by the SG error values for increasing truncation order $N$. The IPM yields a smaller error for the expectation value. Compared to the convergence of the solution, the error of the expectation value decreases until $25$ moments, after which the discretization error dominates the overall error in the solution.\\

However, a main challenge of IPM is its increased numerical costs. Approaches to circumvent this are efficient high order numerical schemes for the spatial and time discretization \cite{kusch2017maximum} as well as parallelization \cite{garrett2015optimization}. In the following, we compare the resulting error of the expectation value for a given runtime in Figure \ref{fig:efficencyPlotBurgers}. All three methods are run on a desktop computer without parallelization. It can be seen that the efficiency curve of the Lasso filter lies below the other methods for most runtimes, i.e. the resulting error is the smallest for a given runtime. The IPM lies below the Lasso curve for very long computation times. 

\begin{figure}[h!]
\centering
  \includegraphics[width=0.8\linewidth]{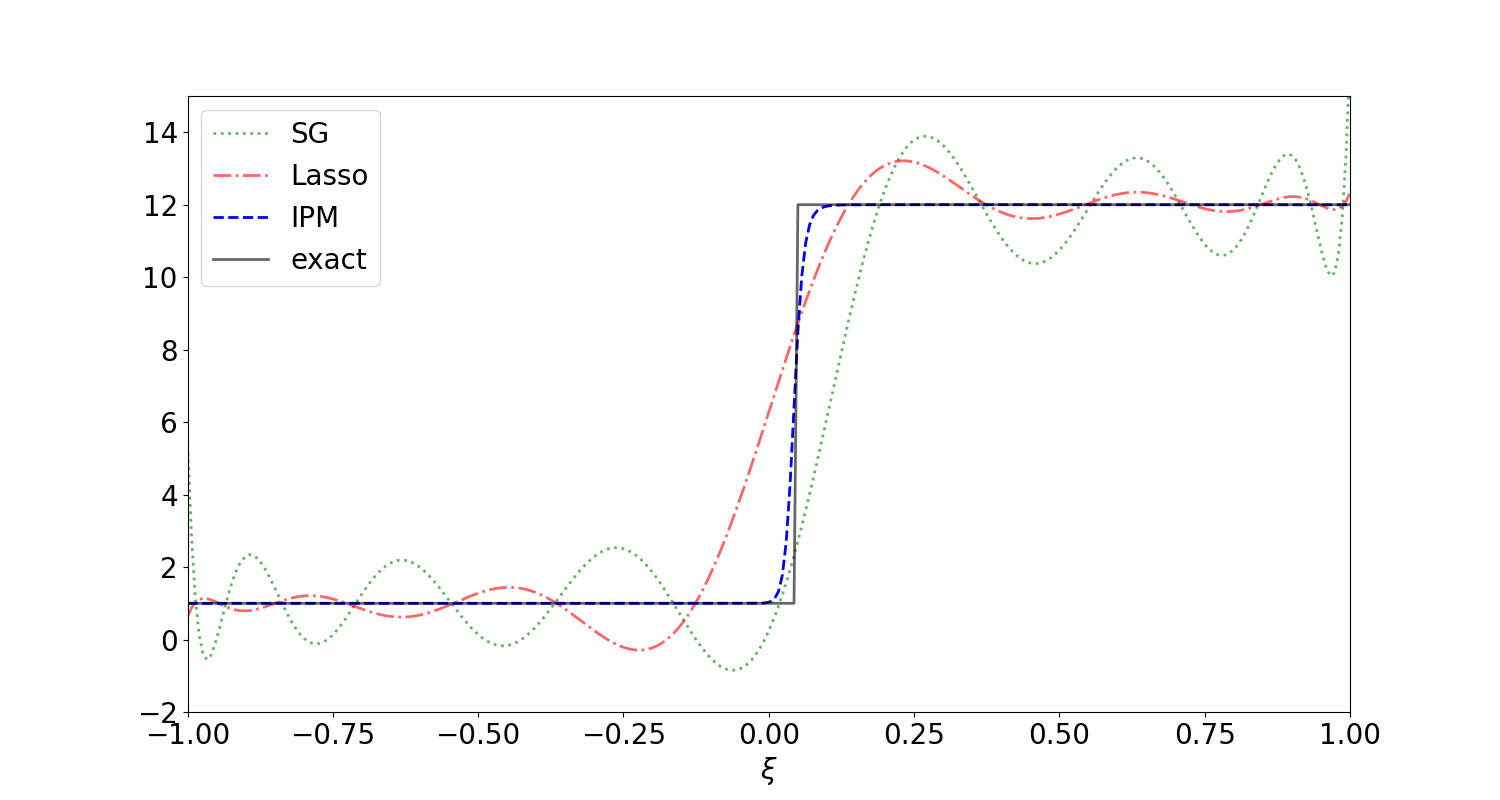}
  \caption{Solutions for fixed spatial position $x^*$ for Burgers equation.}
  \label{fig:IC1fixedX}
\end{figure}

\begin{figure}[h!]
\centering
  \includegraphics[width=0.8\linewidth]{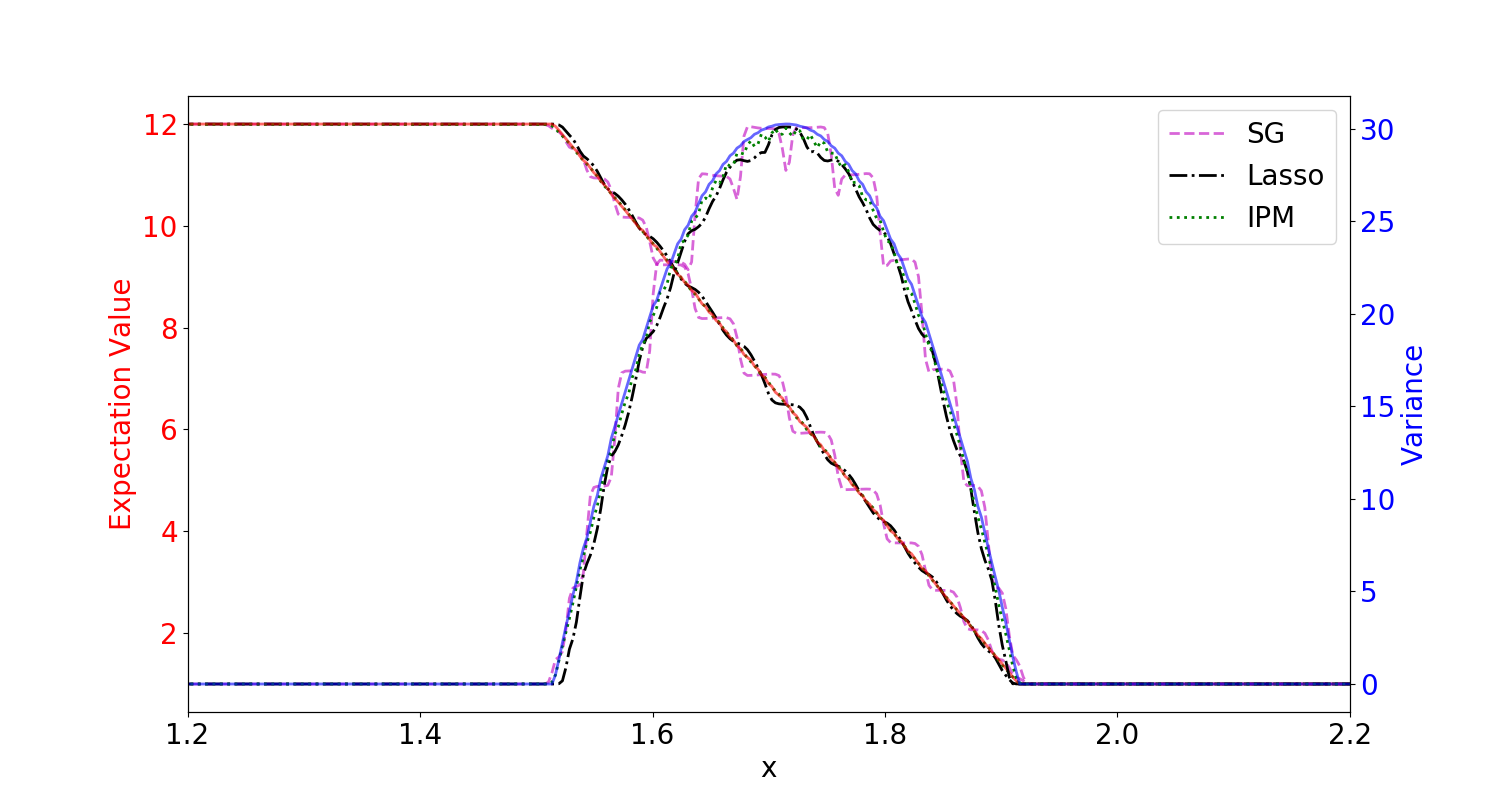}
  \caption{Expectation value and variance for Burgers' equation. The exact expectation value is depicted by the red and the exact variance by the blue line.}
  \label{fig:IC1ExpVar}
\end{figure}

We now take a look at the solution of the stochastic Burgers' equation for a fixed truncation order $N=15$ and compare the results of SG, Lasso and IPM.   Since the moment system of IPM cannot be integrated analytically, we make use of a sixty-point Gauss-Legendre quadrature. The comparison of all three methods for a fixed spatial position $x^* = 1.72$ is shown in Figure \ref{fig:IC1fixedX}. We can see that IPM yields a well-resolved solution approximation, which fulfills the maximum principle. Both SG and filtered SG violate the maximum principle, however come at a cheaper numerical cost. Compared to SG, the filtered SG shows dampened oscillations and a better capturing of the shock position. Note that the polynomial order of the filtered SG is $14$ instead of $15$, since the last moment always has a value of zero by the construction of the filter strength. Taking a look at the comparison of the expectation value in Figure \ref{fig:IC1ExpVar}, we can see that the SG result shows a step-like profile, thus yielding a non-satisfactory solution approximation. The IPM and the filtered SG can approximate the exact solution nicely. Note that Lasso shows a small step in the middle. Taking a look at the variance, we see that SG yields an oscillatory result. The variance computed with IPM lies closer to the exact variance than the variance coming from Lasso, however both methods yield a satisfactory approximation. 

\subsection{Euler 1D}
Though we have managed to avoid an opening loss, in solving a scalar, hyperbolic problem, the true utility of this method will require demonstration on hyperbolic systems and multi-dimensional problems.
In the following, we investigate the stochastic Euler equations in one spatial dimension before solving problems in higher spatial dimensions. In contrast to SG and filtered SG, the IPM moment system remains hyperbolic, i.e. density and pressure remain positive. However, methods to enforce hyperbolicity of the SG moment system such as hyperbolicity limiters \cite{schlachter2017hyperbolicity} exist and can be combined with filters. Test cases discussed in this paper remain hyperbolic without hyperbolicity limiters. The Euler equations are
\begin{align*}
\partial_t
\begin{pmatrix}
\rho \\ \rho u \\ \rho e
\end{pmatrix}
+\partial_x
\begin{pmatrix}
\rho u \\ \rho u^2 +p \\ \rho u (e+p)
\end{pmatrix}
=\bm{0},
\end{align*}
with the initial conditions
\begin{align*}
\rho_{\text{IC}} &= \begin{cases} \rho_L &\mbox{if } x < x_{\text{interface}}(\xi) \\
\rho_R & \mbox{else } \end{cases} \\
(\rho u)_{\text{IC}} &= 0 \\
(\rho e)_{\text{IC}} &= \begin{cases} \rho_L e_L &\mbox{if } x < x_{\text{interface}}(\xi) \\
\rho_R e_R & \mbox{else } \end{cases}
\end{align*}
Here, $\rho$ is the density, $u$ is the velocity and $e$ is the specific total energy. The pressure $p$ can be determined from
\begin{align*}
p = (\gamma-1)\rho\left(e-\frac{1}{2}u^2\right).
\end{align*}
The heat capacity ratio $\gamma$ has a value of $1.4$ for air. Due to the random interface position $x_{\text{interface}}(\xi) = x_0+\sigma \xi$, the solution is uncertain. Again, we use a uniformly distributed random variable with $\Theta=[-1,1]$. Similar to the Burgers' test case, Dirichlet boundary conditions are chosen at the left and right boundary. The underlying numerical flux $\bm{f}^*$ is the HLL-flux. We use the following parameter values:

\begin{center}
    \begin{tabular}{ | l | p{7cm} |}
    \hline
    $[a,b]=[0,1]$ & range of spatial domain \\
    $N_x=2000$ & number of spatial cells \\
    $t_\mathrm{end}=0.14$ & end time \\
    $x_0 = 0.5, \sigma = 0.05$ & interface position parameters\\
    $\rho_L,p_L = 1.0, \rho_R,p_R = 0.3$ & initial states\\
    $N = 15$ & polynomial degree \\
    $\tau = 10^{-7}$ & gradient tolerance for IPM \\
    \hline
    \end{tabular}
\end{center}

Note that for small densities, the moment system of SG and Lasso looses hyperbolicity, which is not the case for IPM. However, IPM comes at a highly increased numerical cost, since an optimization problem with $3(N+1)$ unknowns needs to be solved in all $2000$ spatial cells in every time step. While SG and Lasso take 90.8 seconds to compute, IPM runs for over five hours.

\begin{figure}[h!]
\centering
  \includegraphics[width=0.8\linewidth]{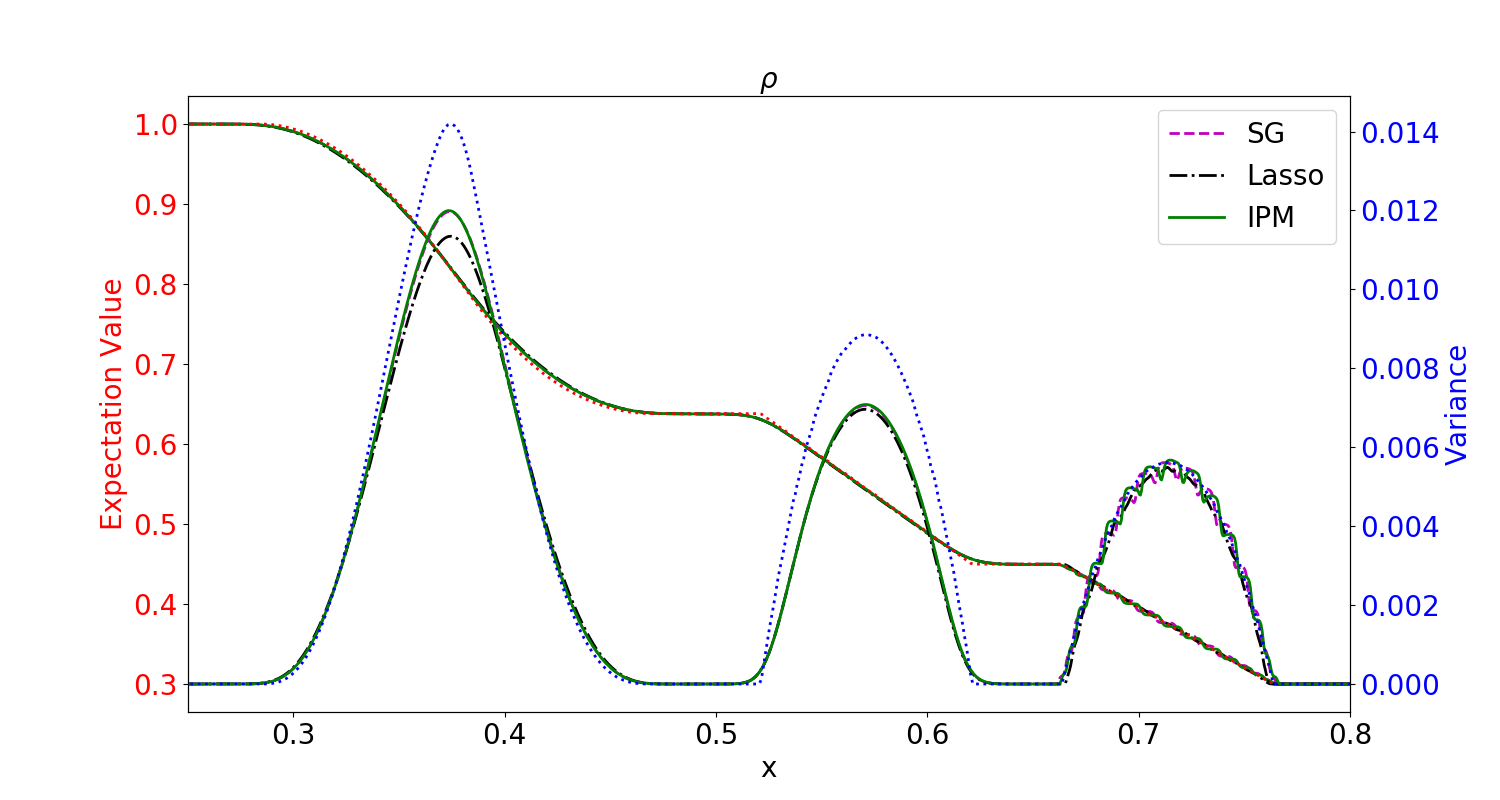}
  \caption{Expected value and variance of density. The exact expectation value is given by the red dotted line and the exact variance is given by the blue dotted line.}
  \label{fig:ExpVarRho}
\end{figure}

\begin{figure}[h!]
\centering
\begin{subfigure}{.5\textwidth}
  \centering
  \includegraphics[width=1.0\linewidth]{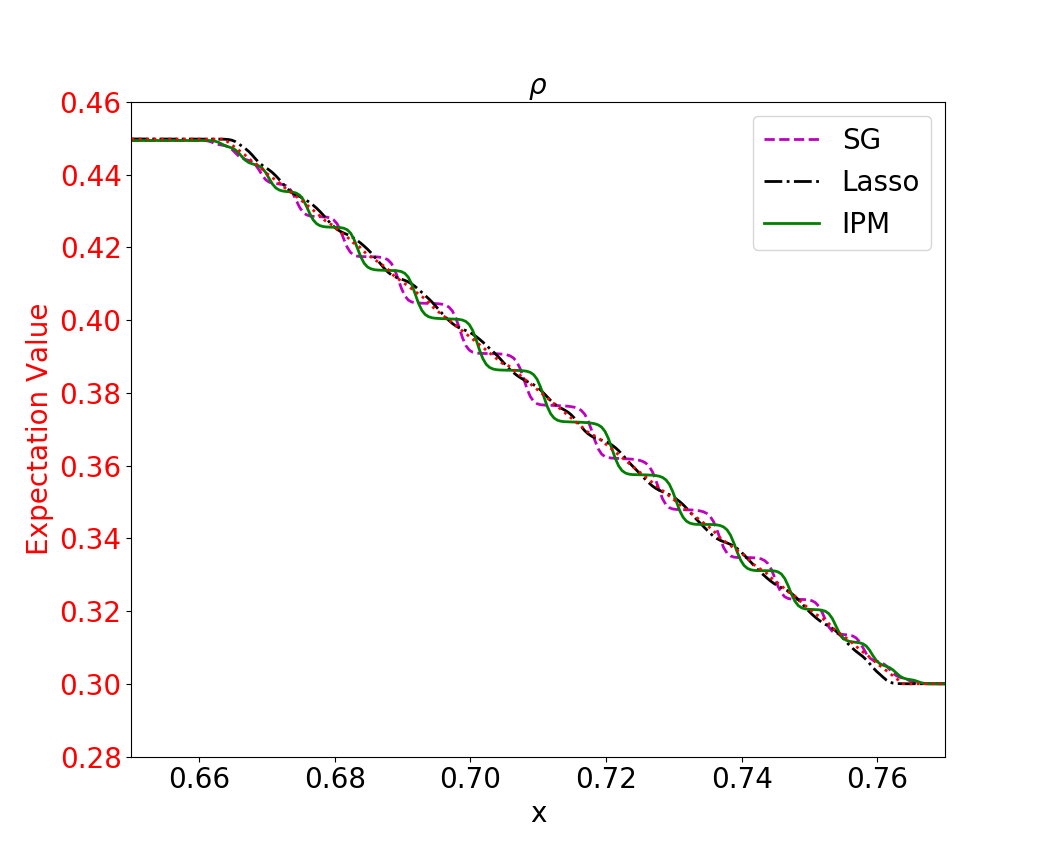}
  \caption{Zoomed view of shock.}
  \label{fig:ExpVarRhoShock1}
\end{subfigure}%
\begin{subfigure}{.5\textwidth}
  \centering
  \includegraphics[width=1.0\linewidth]{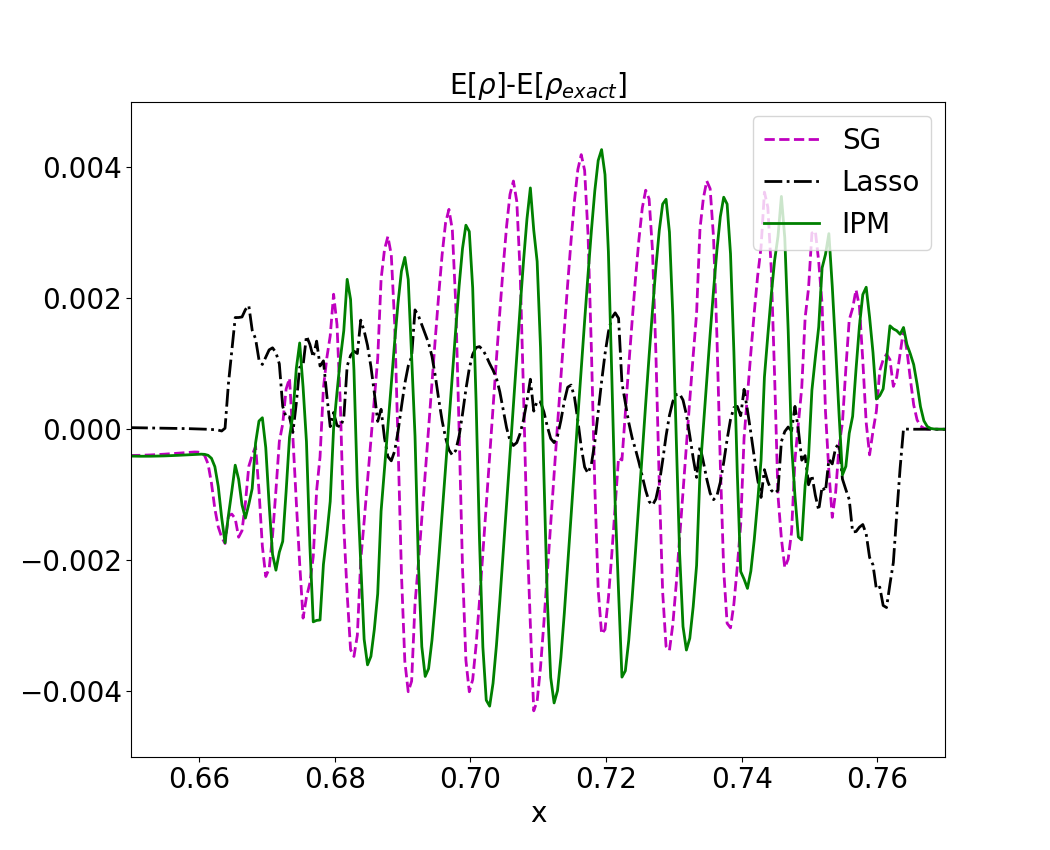}
  \caption{Distance to exact solution.}
  \label{fig:ExpVarRhoShock2}
\end{subfigure}
\caption{Expected value and difference to exact solution of the density shock.}
\label{fig:ExpVarRhoShock}
\end{figure}

\begin{figure}[h!]
\centering
  \includegraphics[width=0.8\linewidth]{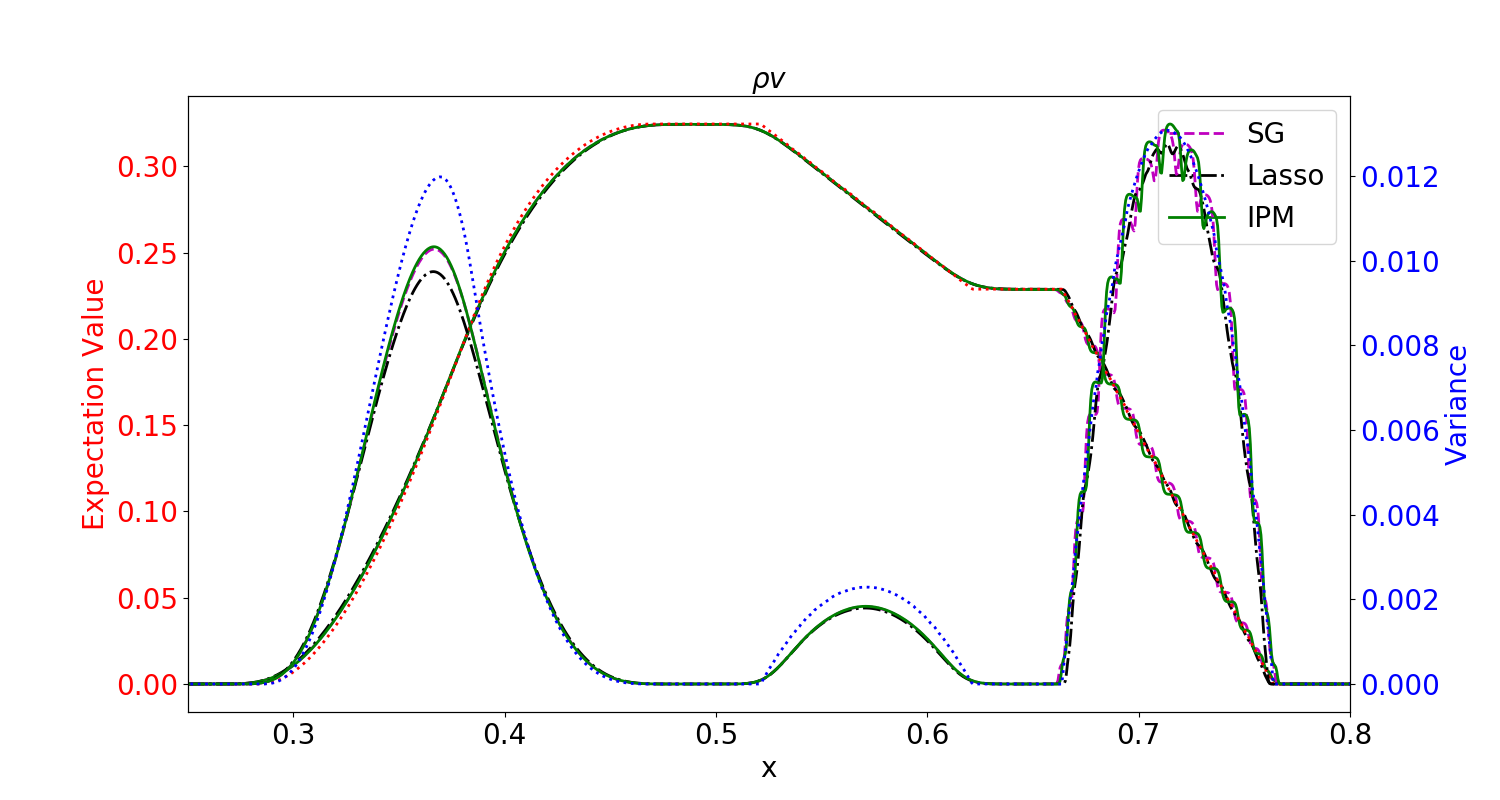}
  \caption{Expected value and variance of momentum.}
  \label{fig:ExpVarMoment}
\end{figure}

\begin{figure}[h!]
\centering
  \includegraphics[width=0.8\linewidth]{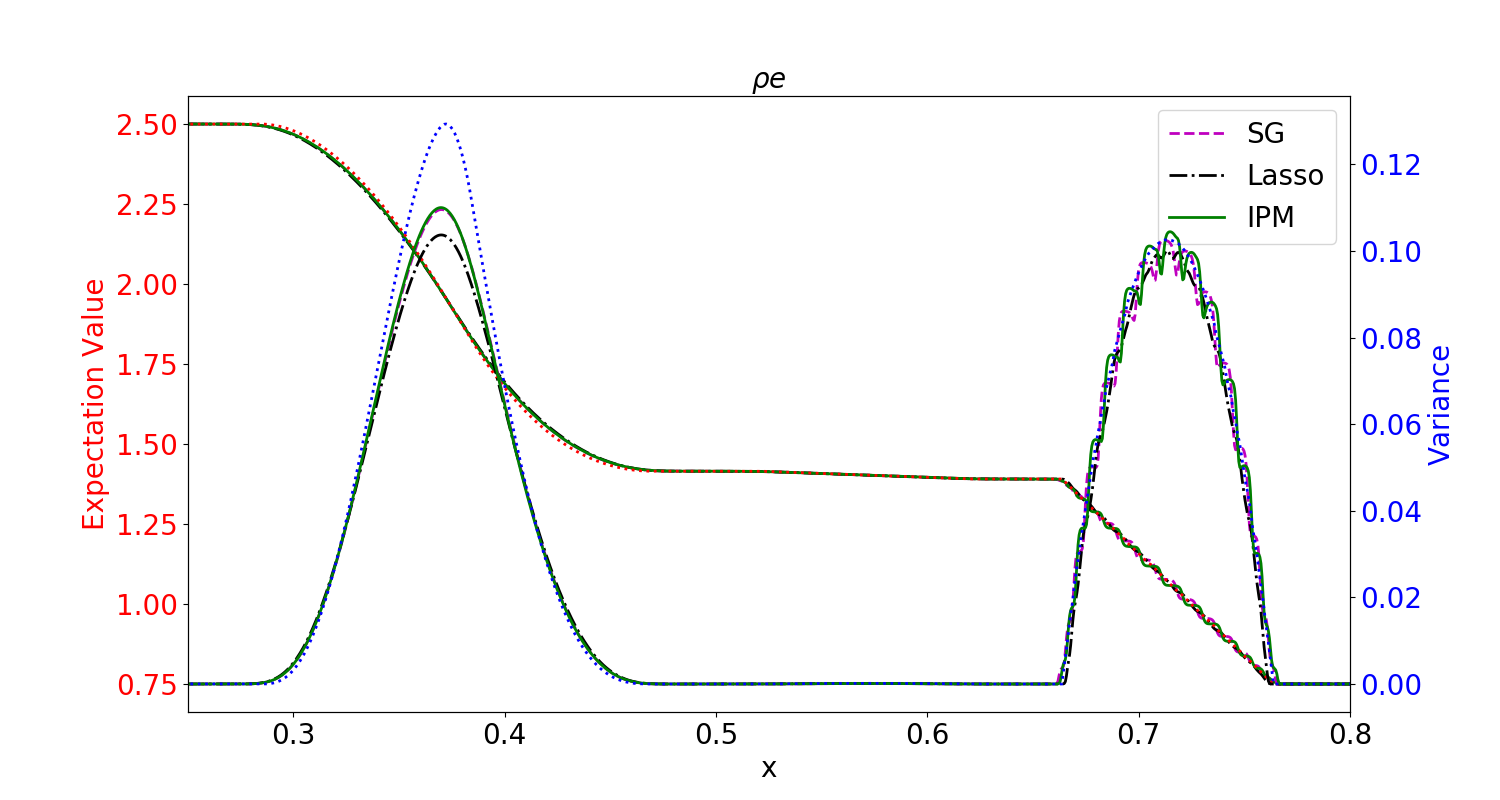}
  \caption{Expected value and variance of internal energy.}
  \label{fig:ExpVarEnergy}
\end{figure}

\begin{figure}[h!]
\centering
  \includegraphics[width=0.8\linewidth]{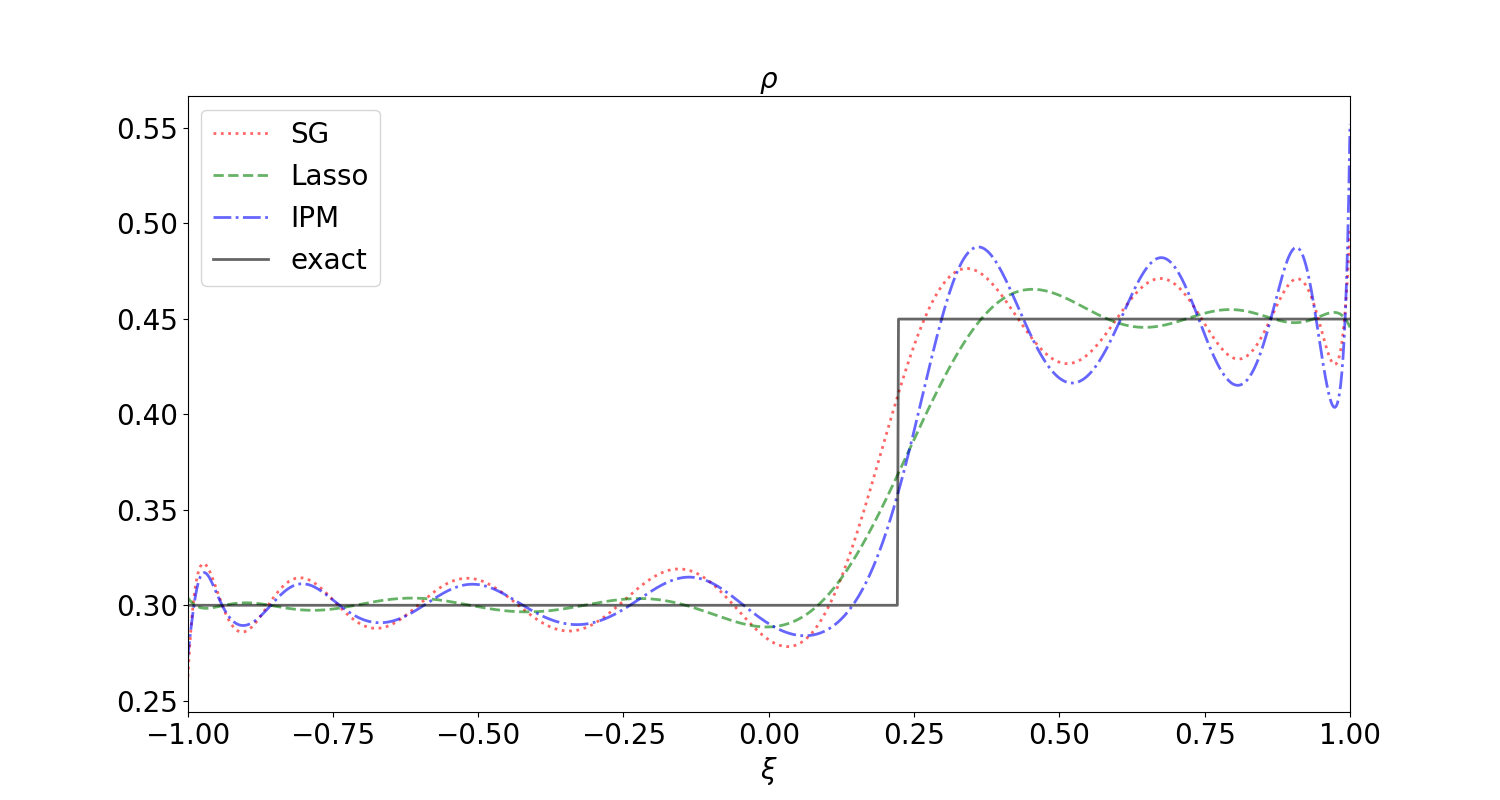}
  \caption{Density at fixed spatial position $x^* = 0.37$.}
  \label{fig:IC1fixedX}
\end{figure}
We start by looking at the expected value of $\rho$ in Figure \ref{fig:ExpVarRho}. The exact solution (which is the dotted red line) shows the expected value for the rarefaction wave in the left, the contact discontinuity in the middle and the shock on the right side of the spatial domain. A non-zero variance is observed at these solution regions. The exact variance is plotted by the blue dotted line. All three methods show a poor approximation of the variance, which is caused by the numerical diffusion of the finite volume scheme. The expected value is (except for the shock) nicely approximated by all three methods. A zoomed view of the shock can be found in Figure \ref{fig:ExpVarRhoShock}. While SG and IPM show a step-like approximation, the Lasso yields a satisfactory approximation. The same holds for the variance of the shock, where SG and IPM show oscillations. Looking at the variance of the rarefaction wave, one notices that SG and IPM reach the value of the variance more closely. The same behavior can be found for the momentum in Figure \ref{fig:ExpVarMoment} and the internal energy in Figure \ref{fig:ExpVarEnergy}. Looking at the approximation of the density shock for a fixed spatial position $x^* = 0.37$, one finds that the Lasso approximation, which is of polynomial order $N-1$ smears out the discontinuity, however damps oscillatory over- and undershoots while capturing the shock position nicely. The SG and IPM solutions show increased oscillations. Note that IPM especially oscillates at the right state, which leads to high density values. 
One needs to point out that in the chosen setting, the IPM yields no clear advantage compared to SG, since the choice of entropies is limited and we cannot prescribe upper and at the same time lower bounds as done for scalar problems. At the same time, the SG yields a similar result in a small portion of the computing time of IPM. However, for small densities both, the SG and Lasso will crash. By combining the Lasso method with a hyperbolicity preserving limiter \cite{schlachter2017hyperbolicity}, we are able to maintain hyperbolicity.

\subsection{Lightning strike with Obstacles}
\label{sec:Lightning}
Now, we study the stochastic Euler equations in 2D, which are
\begin{align*}
\partial_t
\begin{pmatrix}
\rho \\ \rho u \\ \rho v \\ \rho e
\end{pmatrix}
+\partial_x
\begin{pmatrix}
\rho u \\ \rho u^2 +p \\ \rho u v \\ \rho u (e+p)
\end{pmatrix}
+\partial_y
\begin{pmatrix}
\rho u \\ \rho u v +p \\ \rho v^2 \\ \rho v (e+p)
\end{pmatrix}
=\bm{0},
\end{align*}
with the initial conditions
\begin{align}\label{eq:ICEulerShock}
\rho_{\text{IC}} &= \begin{cases} \rho_L &\mbox{if } \Vert x \Vert < x_0 + \sigma \xi \\
\rho_R & \mbox{else } \end{cases} \\
(\rho u)_{\text{IC}} &= 0 \\
(\rho e)_{\text{IC}} &= \begin{cases} \rho_L e_L &\Vert x \Vert < x_0 + \sigma \xi \\
\rho_R e_R & \mbox{else } \end{cases}
\end{align}
The spatial domain is given by $[a,b]\times [a,b]$ and includes four square obstacles centered at positions $\bm{x}_{1,2,3,4}$ with length $l_{1,2,3,4}$. At the obstacles' boundaries, we use the Euler slip boundary condition. We use the following parameter values:
\begin{center}
    \begin{tabular}{ | l | p{7cm} |}
    \hline
    $[a,b]=[-0.3,0.3]$ & range of spatial domain \\
    $N_x=700,N_y = 700$ & number of spatial cells in each dimension \\
    $t_\mathrm{end}=0.14$ & end time \\
    $x_0 = 0.05, \sigma = 0.05$ & interface position parameters\\
    $\rho_L,p_L = 1.0, \rho_R = 0.8, p_R = 0.3$ & initial states\\
    $N = 8$ & polynomial degree\\
    $\bm{x}_{1} = (0,0.15)^T,\bm{x}_{2} = (0.1,0)^T,$ & obstacle positions\\ 
    $\bm{x}_{3} = (-0.1,0.1)^T,\bm{x}_{4} = (-0.1,0)^T$ & \\
    $l_1 = 0.06, l_2 = 0.04, l_3 = 0.02, l_4 = 0.01$ & obstacle length \\
    \hline
    \end{tabular}
\end{center}
Due to the fact that the spatial discretization consists of $N_x\cdot N_y$ cells and we have one additional equation (namely for the y-momentum), we do no longer study the results for IPM due to the much higher computational cost, and focus on comparing SG and the Lasso method. A reference solution has been computed using collocation with a $40$ point Gauss-Lobatto quadrature set. The different columns of Figure~\ref{fig:Obstacles2D} depict different methods to compute the solution, whereas the rows show the expectation value on the left and the variance on the right for the density. For both, expectation value and variance, Stochastic Galerkin yields oscillatory solutions. This is most obvious in the outer shock wave, but also reflected shocks suffer from oscillations as well. The Lasso filter yields results which agree nicely with the reference solution in both expected value and variance.

\newpage

\newgeometry{top=1.5cm, left=1.0cm}
\begin{figure}[h!]
\centering
	\begin{subfigure}{0.5\linewidth}
		\centering
		\includegraphics[scale=0.38]{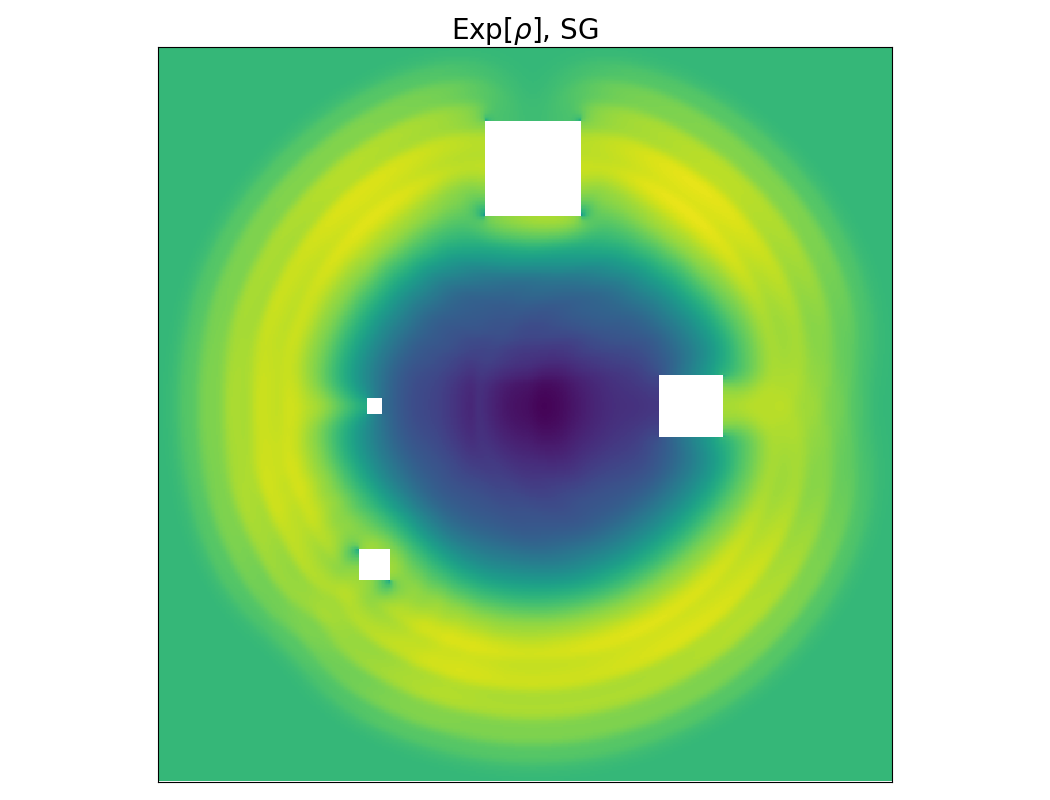}
		
		\label{fig:sub1}
	\end{subfigure}%
	\begin{subfigure}{0.5\linewidth}
		\centering
		\includegraphics[scale=0.38]{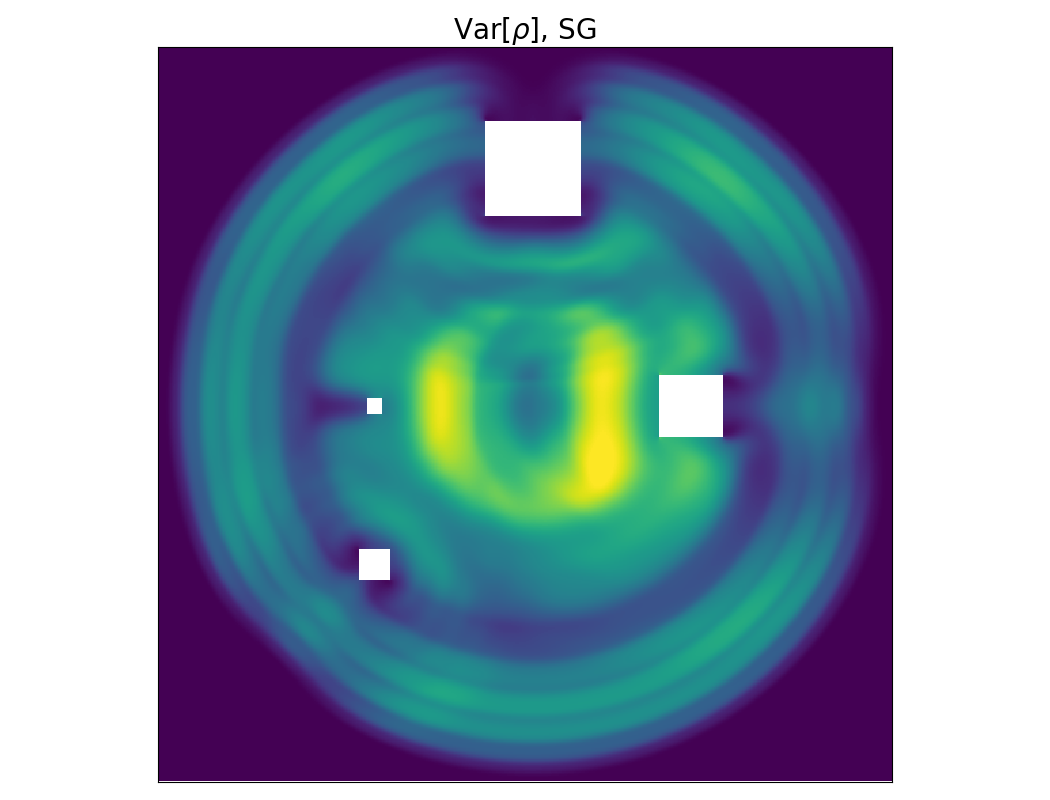}
		
		\label{fig:sub2}
	\end{subfigure}
	\begin{subfigure}{0.5\linewidth}
		\centering
		\includegraphics[scale=0.38]{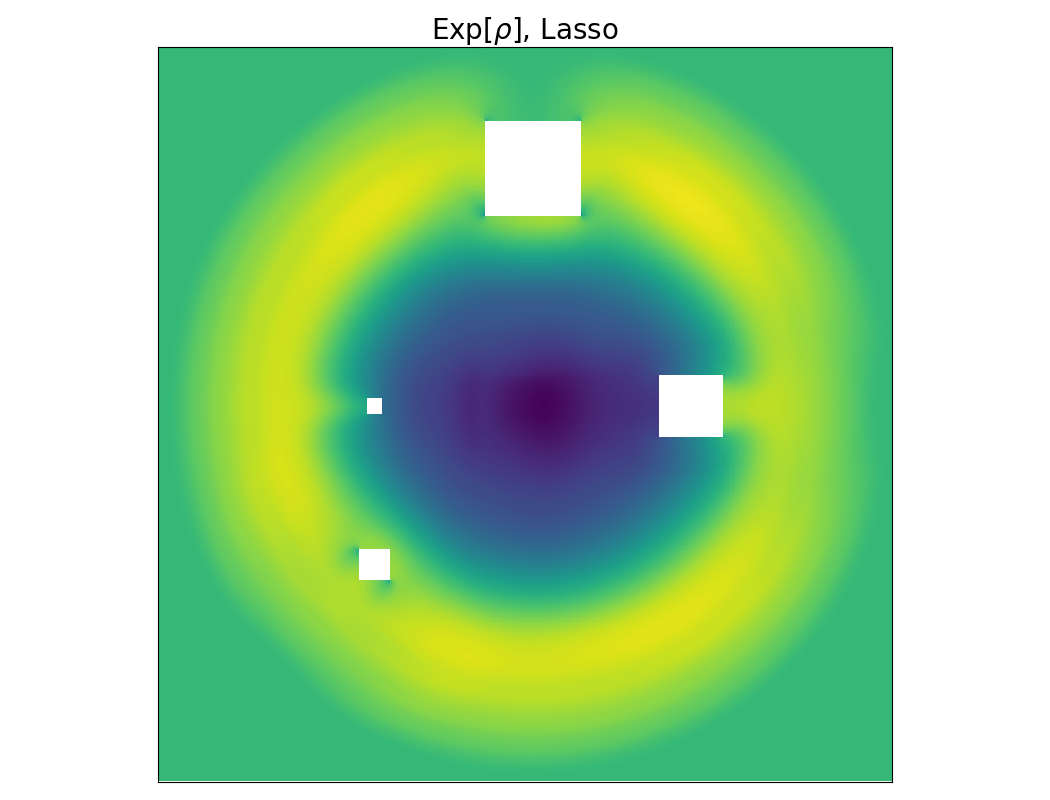}
		
		\label{fig:sub1}
	\end{subfigure}%
	\begin{subfigure}{0.5\linewidth}
		\centering
		\includegraphics[scale=0.38]{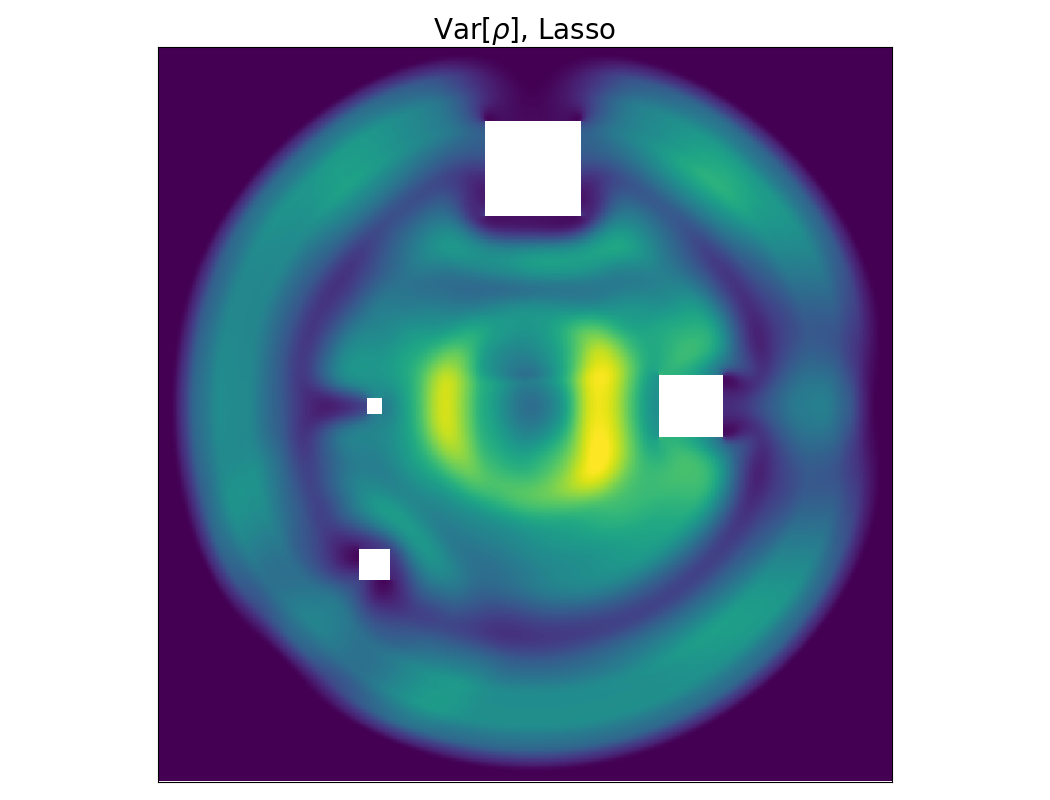}
		
		\label{fig:sub2}
	\end{subfigure}
		\begin{subfigure}{0.5\linewidth}
		\centering
		\includegraphics[scale=0.38]{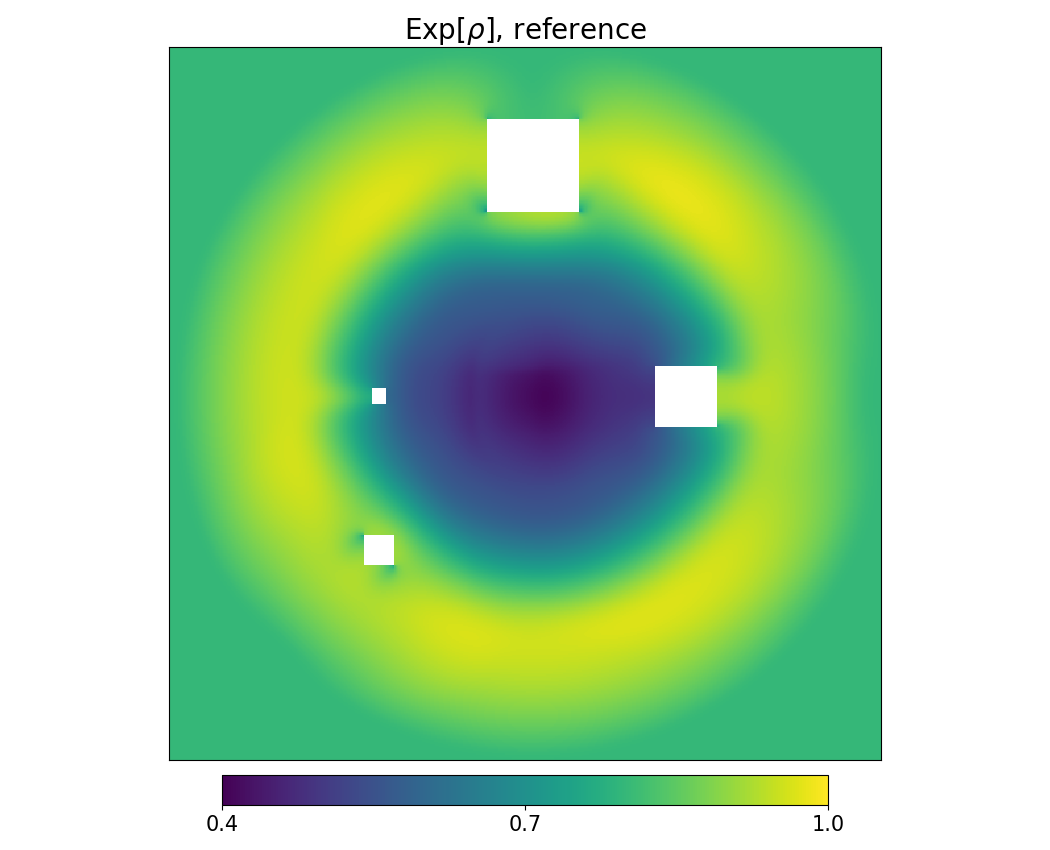}
		
		\label{fig:sub1}
	\end{subfigure}%
	\begin{subfigure}{0.5\linewidth}
		\centering
		\includegraphics[scale=0.38]{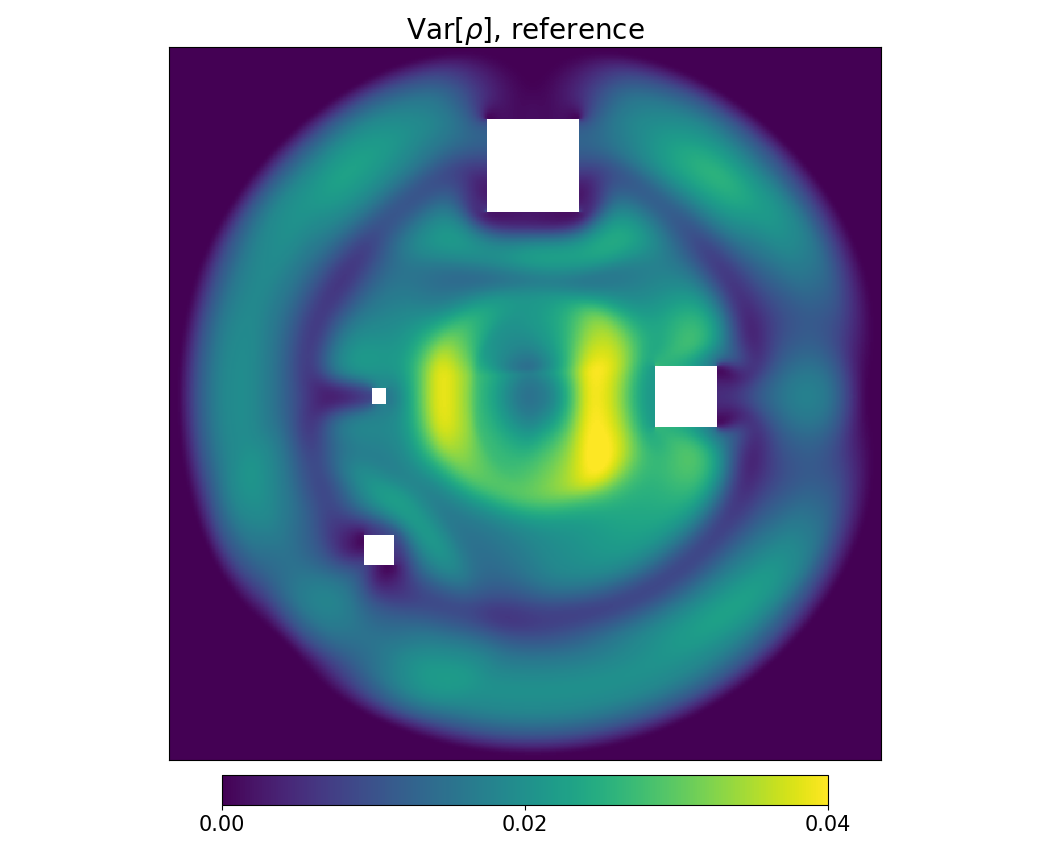}
		
		\label{fig:sub2}
	\end{subfigure}
	\caption{Expected value and variance with different methods plotted over the spatial domain.}
	\label{fig:Obstacles2D}
\end{figure}
\newgeometry{top=2.5cm, bottom=2.5cm}

\subsection{Shock in a duct}
The following test case is dedicated to comparing results obtained with different methods, including the L$^2$ filter. As before, we solve the two-dimensional Euler equations. As geometry, a duct is chosen. The initial condition is \eqref{eq:ICEulerShock} with $x_0=0.5$, i.e. the gas is initially in a shock state with high density and pressure on the left hand side.
Parameters, which differ from the settings in subsection~\ref{sec:Lightning} are
\begin{center}
    \begin{tabular}{ | l | p{7cm} |}
    \hline
    $[a,b]=[0.0,1.0]$ & range of spatial domain \\
    $N_x=400,N_y = 400$ & number of spatial cells in each dimension \\
    $t_\mathrm{end}=0.35$ & end time \\
    $x_0 = 0.5, \sigma = 0.1$ & interface position parameters\\
    \hline
    \end{tabular}
\end{center}
To use the L$^2$ filter, we need to conduct a parameter study to obtain an adequate filter coefficient $\lambda$. Due to the test case's similarity to the one-dimensional shock tube, we can perform a parameter study for a one-dimensional shock using the same numerical parameters as in two dimensions. Because of the heavily reduced numerical costs, we were able to find a suitable filter parameter of $\lambda = 3.0\cdot 10^{-6}$. The Lasso filter, as before, picks the filter parameter automatically. The results of the expectation value can be found in Figure~\ref{fig:ExpDuct2D} and the variance is depicted in Figure~\ref{fig:VarDuct2D}.
\begin{figure}[h!]
\centering
	\begin{subfigure}{0.5\linewidth}
		\centering
		\includegraphics[scale=0.27]{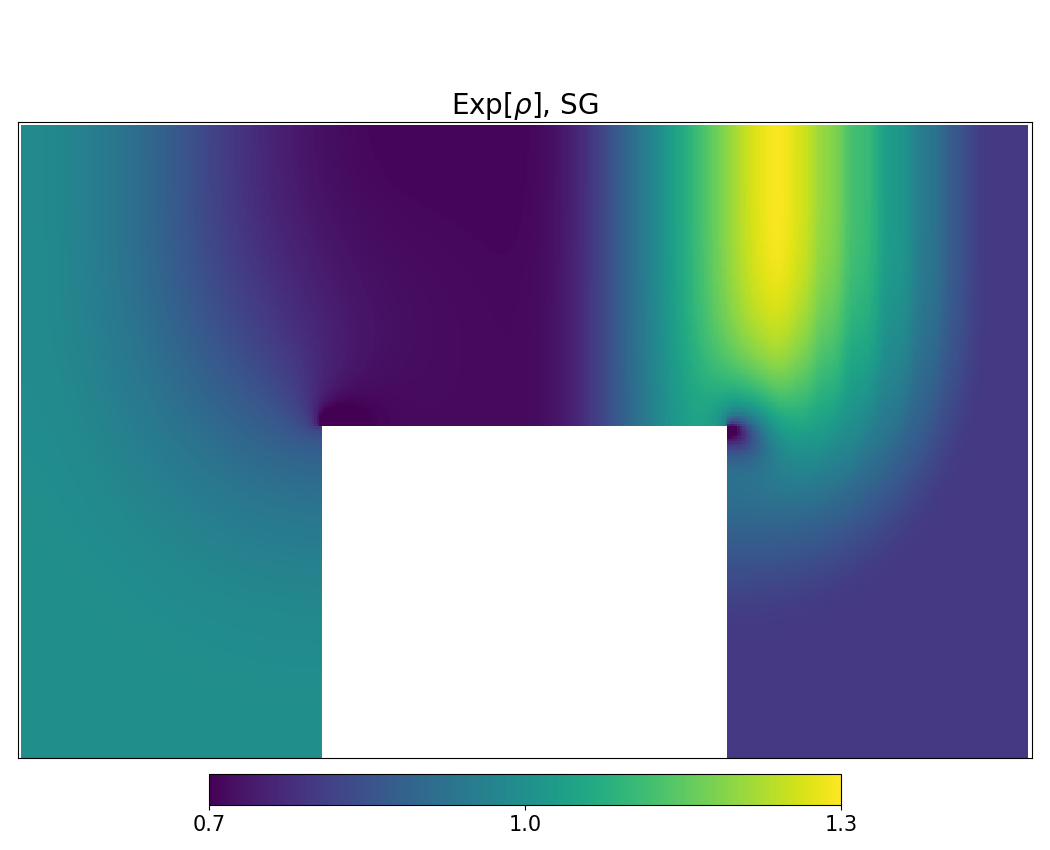}
		
		\label{fig:sub1}
	\end{subfigure}%
	\begin{subfigure}{0.5\linewidth}
		\centering
		\includegraphics[scale=0.27]{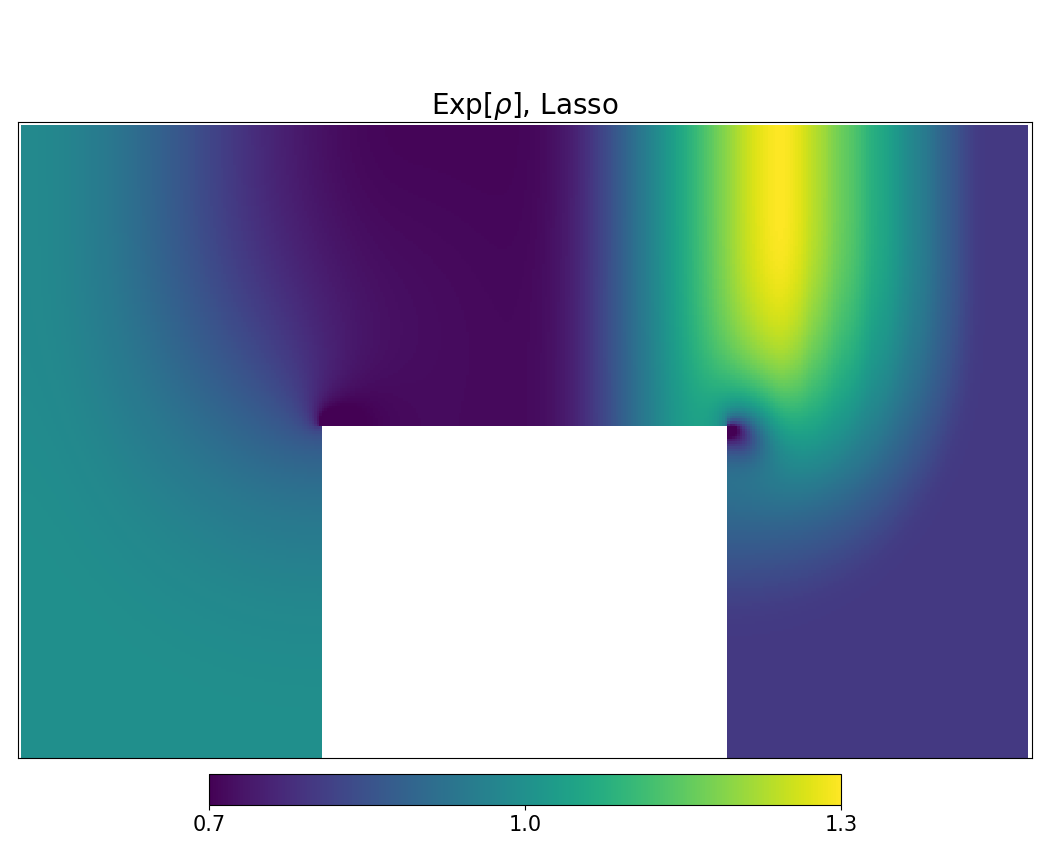}
		
		\label{fig:sub2}
	\end{subfigure}
	\begin{subfigure}{0.5\linewidth}
		\centering
		\includegraphics[scale=0.27]{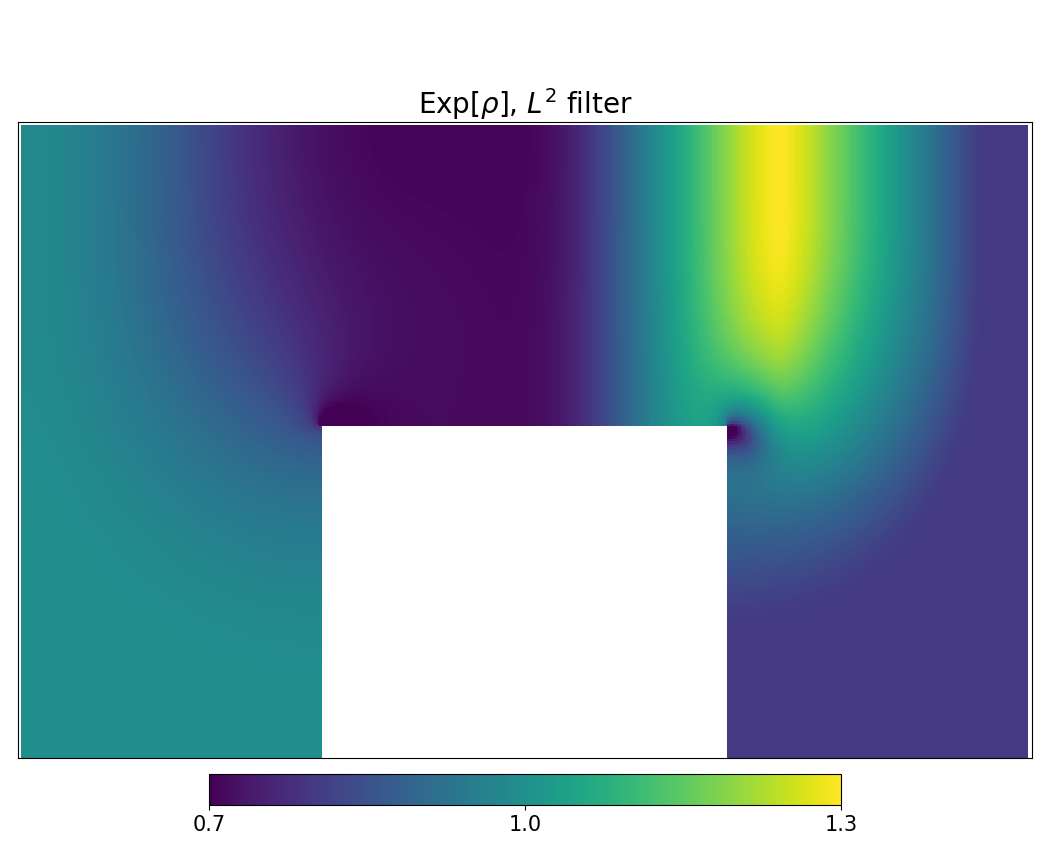}
		
		\label{fig:sub1}
	\end{subfigure}%
	\begin{subfigure}{0.5\linewidth}
		\centering
		\includegraphics[scale=0.27]{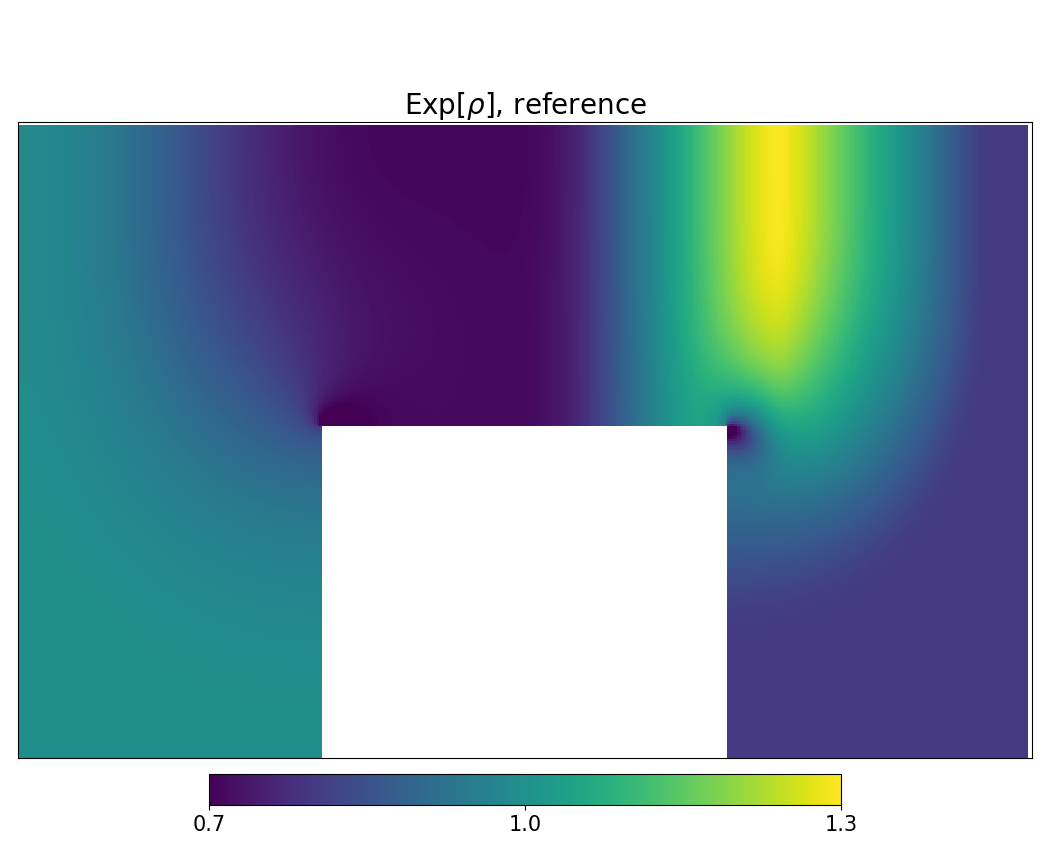}
		
		\label{fig:sub2}
	\end{subfigure}
	\caption{Expected value with different methods plotted over the spatial domain $[0,1] \times [0.3725,1]$.}
	\label{fig:ExpDuct2D}
\end{figure}
\begin{figure}[h!]
\centering
	\begin{subfigure}{0.5\linewidth}
		\centering
		\includegraphics[scale=0.27]{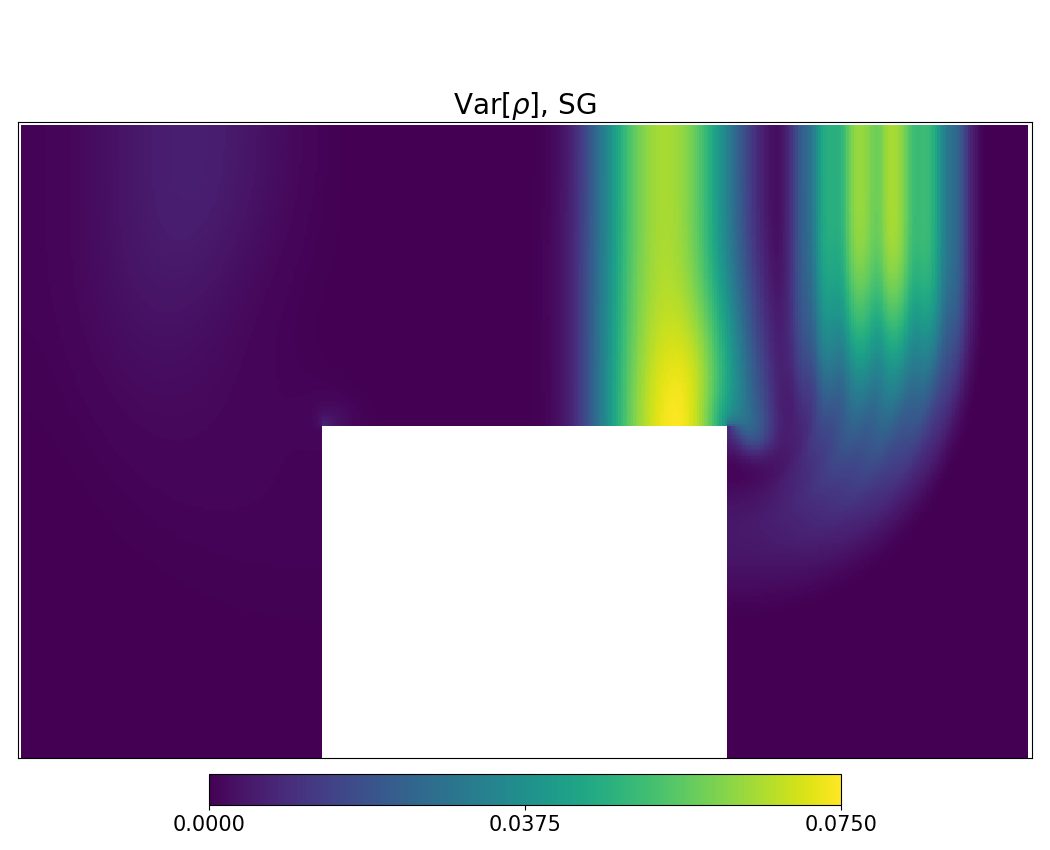}
		
		\label{fig:sub1}
	\end{subfigure}%
	\begin{subfigure}{0.5\linewidth}
		\centering
		\includegraphics[scale=0.27]{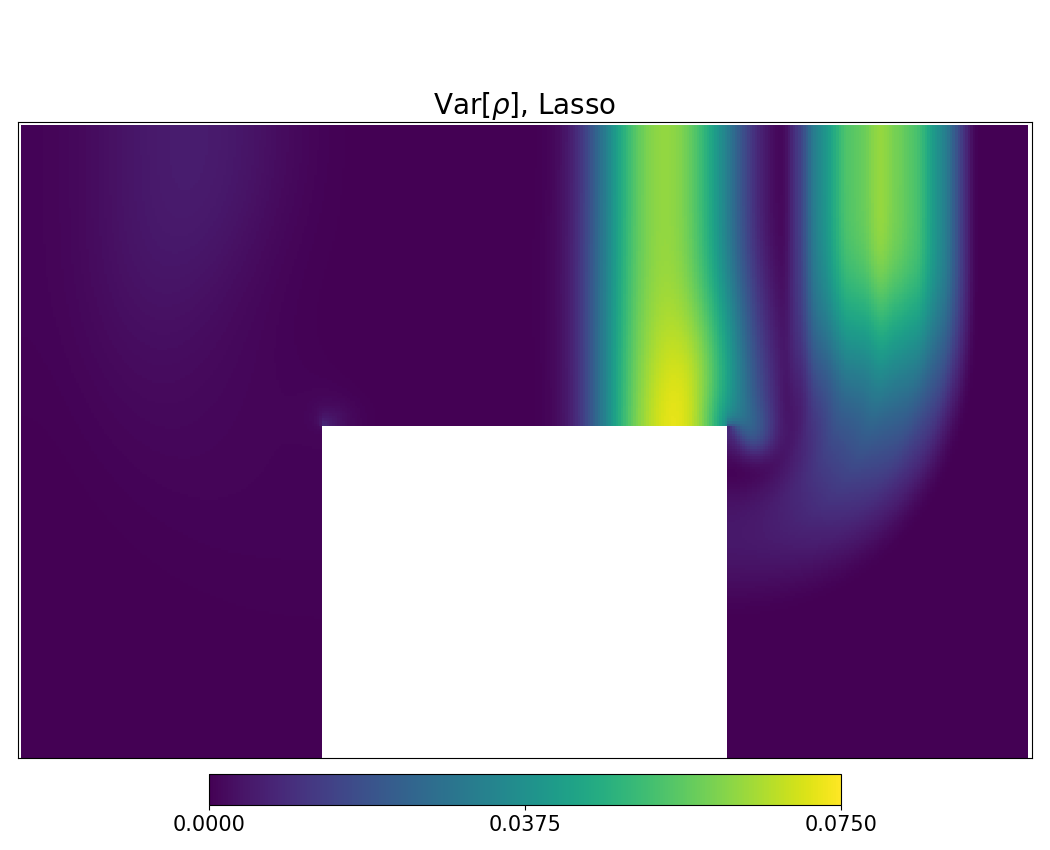}
		
		\label{fig:sub2}
	\end{subfigure}
	\begin{subfigure}{0.5\linewidth}
		\centering
		\includegraphics[scale=0.27]{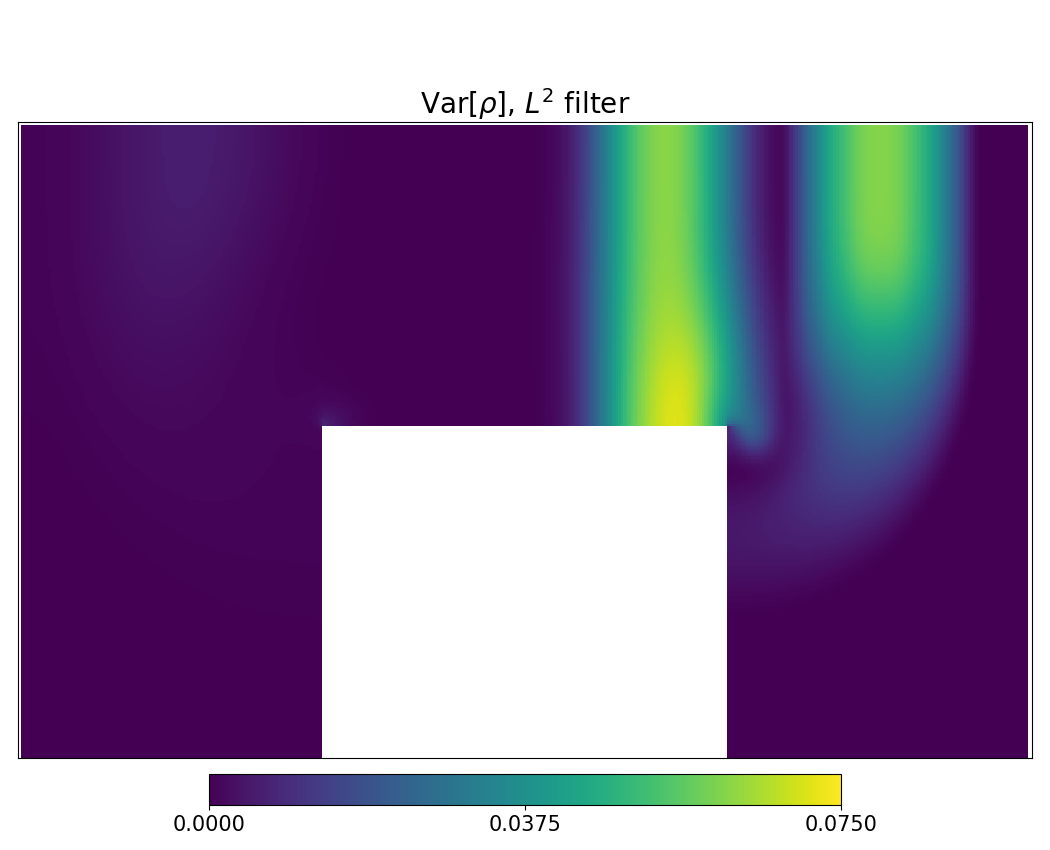}
		
		\label{fig:sub1}
	\end{subfigure}%
	\begin{subfigure}{0.5\linewidth}
		\centering
		\includegraphics[scale=0.27]{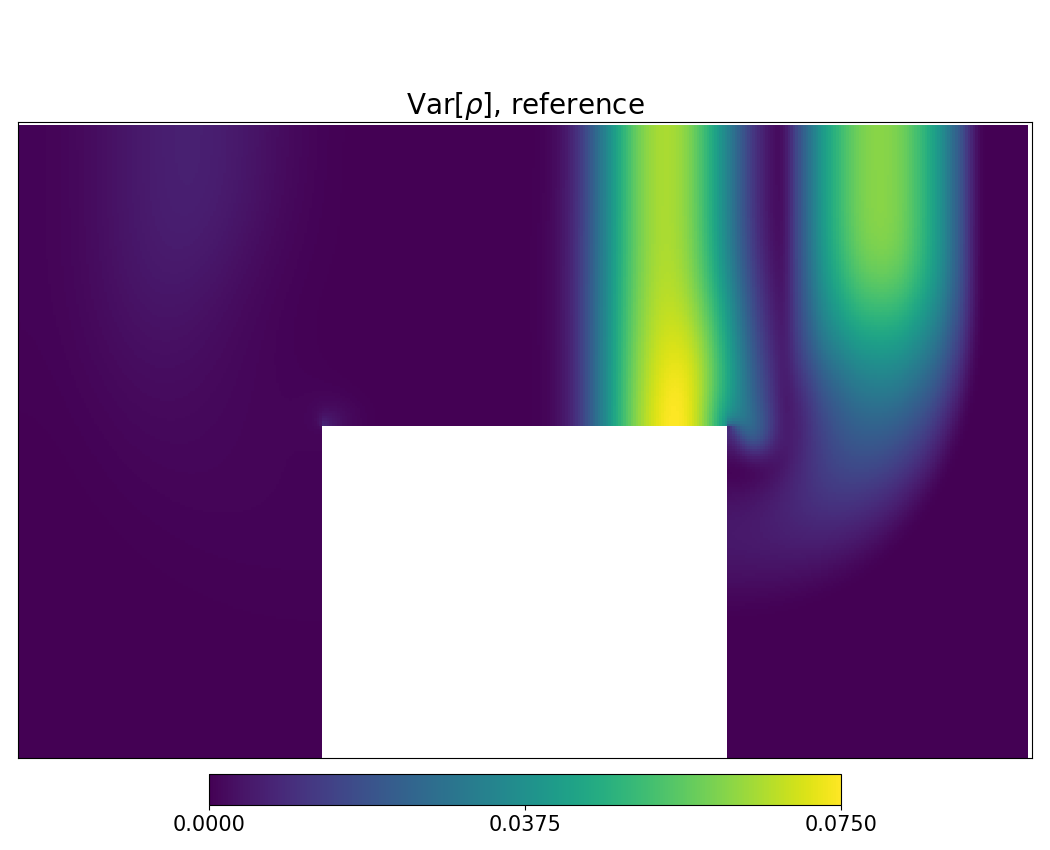}
		
		\label{fig:sub2}
	\end{subfigure}
	\caption{Variance with different methods plotted over the spatial domain $[0,1] \times [0.3725,1]$.}
	\label{fig:VarDuct2D}
\end{figure}

Again, the SG solution of both expected value and variance shows non-physical oscillations at the front shock. Both filters are able to mitigate these oscillations, however, the L$^2$ filter yields more accurate results. Note, however that the chosen filter strength was determined by a parameter study, which in general is prohibitively expensive if one cannot use simpler, one-dimensional problems to model filtering effect of two-dimensional problems.

\section{Conclusion and Outlook}
\label{sec:ConclusionOutlook}
In this work, we have investigated the effects of filters when being applied to the SG method. Due to the challenging task of determining a suitable filter strength for a given problem, we derived the Lasso filter. This filter uses a Lasso penalizing term in the cost function to enforce sparsity of the filtered expansion coefficients. Taking advantage of this property, we automated the choice of the filter coefficients by choosing a filter strength which sets the last moment to zero. Consequently, the filter is turned down in smooth solution regimes (which have a small last moment) and amplified at shocks. We have applied the Lasso filter to several test cases, where we compare the results with IPM and standard SG. It turns out that the filter outperforms SG and can compete with IPM in scalar problems. The investigation of the Euler equations showed that both IPM and SG show a non-physical, step-like profile of the expectation value at shocks, whereas the Lasso filter satisfactorily approximates the correct linear behavior. However, the variance of the rarefaction wave is better resolved by SG and IPM. In the two-dimensional setting, we again observed oscillations of SG for both expectation value and variance. In contrast to that, the Lasso filter yields non-oscillatory results and shows good agreement with the reference solution. The same holds for the L$^2$ filter, where we chose the filter strength based on a one-dimensional parameter study.

Due to their easy integration into existing SG code as well as their nice approximation behavior, we consider filters to be a promising tool in uncertainty quantification. Various properties should be examined in future work: First of all, one should investigate the effects of different filters and find ways to automate the choice of the filtering coefficients. This can be done by forcing the last moments to lie below a certain threshold or by investigating the regularity of the corresponding equation solved by the filter analytically. Furthermore, we have not shown results of the filter combined with the hyperbolicity limiter introduced in \cite{schlachter2017hyperbolicity}. The idea is to use this limiter on the filtered moments in order to preserve hyperbolicity of the moment system. One needs to study the effects caused by the combination of these two techniques on the solution. Furthermore, the effects of different filters when investigating problems with multiple uncertainties needs to be investigated.

\section*{Acknowledgments}
Funding: Jonas Kusch and Martin Frank were supported by the German Research Foundation (DFG) under grant FR 2841/6-1. Ryan McClarren would like to thank the Steinbuch Centre for Computing for hosting him during portions of this work.

\bibliographystyle{siamplain}

\bibliography{Lasso}
\end{document}